\newcommand{\declarecolor}[2]{\definecolor{#1}{RGB}{#2}\expandafter\newcommand\csname #1\endcsname[1]{\textcolor{#1}{##1}}}
\newcommand{\declareperson}[1]{\expandafter\newcommand\csname#1\endcsname[1]{\textcolor{Orange}{#1: ##1}}}
\theoremstyle{plain}
\newtheorem{theorem}{Theorem}[section]
\newtheorem{lemma}[theorem]{Lemma}
\newtheorem{corollary}[theorem]{Corollary}
\newtheorem{proposition}[theorem]{Proposition}
\newtheorem{conjecture}[theorem]{Conjecture}
\theoremstyle{definition}
\newtheorem{definition}[theorem]{Definition}
\theoremstyle{remark}
\newtheorem{remark}[theorem]{Remark}
\newlist{parts}{enumerate}{10}
\setlist[parts]{label=\arabic*.,ref=\arabic*}
	\crefname{partsi}{Part}{Parts}
	\crefname{partsi}{part}{parts}
\Crefname{partsi}{Part}{Parts}
\newlist{forms}{enumerate}{10}
\setlist[forms]{label=\roman*),ref=(\roman*)}
	\crefname{formsi}{}{}
	\crefname{formsi}{}{}
\Crefname{formsi}{}{}
\newlist{conds}{enumerate}{10}
\setlist[conds]{label=\rm{(\arabic*)},ref=\arabic*}
	\crefname{condsi}{}{}
	\crefname{condsi}{}{}
\Crefname{condsi}{}{}
\newcommand*{\Z}{{\mathbb{Z}}}
\let\R\relax
\newcommand*{\R}{{\mathbb{R}}}
\newcommand*{\1}{{\mathds{1}}}
\newcommand*{\I}{{\mathcal{I}}}
\DeclareMathOperator{\rank}{rank}
\providecommand{\given}{}
\DeclarePairedDelimiterX{\card}[1]{\lvert}{\rvert}{\renewcommand\given{\nonscript\:\delimsize\vert\nonscript\:\mathopen{}}#1}
\DeclarePairedDelimiterX{\abs}[1]{\lvert}{\rvert}{\renewcommand\given{\nonscript\:\delimsize\vert\nonscript\:\mathopen{}}#1}
\DeclarePairedDelimiterX{\norm}[1]{\lVert}{\rVert}{\renewcommand\given{\nonscript\:\delimsize\vert\nonscript\:\mathopen{}}#1}
\DeclarePairedDelimiterX{\tuple}[1]{\lparen}{\rparen}{\renewcommand\given{\nonscript\:\delimsize\vert\nonscript\:\mathopen{}}#1}
\DeclarePairedDelimiterX{\parens}[1]{\lparen}{\rparen}{\renewcommand\given{\nonscript\:\delimsize\vert\nonscript\:\mathopen{}}#1}
\DeclarePairedDelimiterX{\brackets}[1]{\lbrack}{\rbrack}{\renewcommand\given{\nonscript\:\delimsize\vert\nonscript\:\mathopen{}}#1}
\DeclarePairedDelimiterX{\set}[1]\{\}{\renewcommand\given{\nonscript\:\delimsize\vert\nonscript\:\mathopen{}}#1}
\let\Pr\relax
\DeclarePairedDelimiterXPP{\Pr}[1]{\mathbb{P}}[]{}{\renewcommand\given{\nonscript\:\delimsize\vert\nonscript\:\mathopen{}}#1}
\DeclarePairedDelimiterXPP{\PrX}[2]{\mathbb{P}_{#1}}[]{}{\renewcommand\given{\nonscript\:\delimsize\vert\nonscript\:\mathopen{}}#2}
\DeclarePairedDelimiterXPP{\Ex}[1]{\mathbb{E}}[]{}{\renewcommand\given{\nonscript\:\delimsize\vert\nonscript\:\mathopen{}}#1}
\DeclarePairedDelimiterXPP{\ExX}[2]{\mathbb{E}_{#1}}[]{}{\renewcommand\given{\nonscript\:\delimsize\vert\nonscript\:\mathopen{}}#2}
\newcommand*{\eval}[1]{\left.#1\right\rvert}
\title{Log-Concave Polynomials III: 
Mason's Ultra-Log-Concavity Conjecture for Independent Sets of Matroids}
\author{Nima Anari}
\affil{\small Stanford University, \textsf{anari@cs.stanford.edu}}
\author{Kuikui Liu}
\author{Shayan Oveis Gharan}
\affil{\small University of Washington, \textsf{liukui17@cs.washington.edu}, \textsf{shayan@cs.washington.edu}}
\author{Cynthia Vinzant}
\affil{\small North Carolina State University, \textsf{clvinzan@ncsu.edu}}
\begin{document}
	\maketitle
\begin{abstract}
We give a self-contained proof of the strongest version of Mason's conjecture, namely that for any matroid the sequence of the number of independent sets of given sizes is ultra log-concave. To do this, we introduce a class of polynomials, called completely log-concave polynomials, whose bivariate restrictions have ultra log-concave coefficients. At the heart of our proof we show that for any matroid, the homogenization of the generating polynomial of its independent sets is completely log-concave.
\end{abstract}	

\section{Introduction}
\label{sec:intro}

Matroids are combinatorial structures that model various types of independence, such as linear independence of vectors in a linear space or algebraic independence of  elements in a field extension. For an inspiring recent survey, see \cite{Ard18}. There have been several recent breakthroughs proving inequalities on sequences of numbers associated to matroids.  While the proofs in this paper are self-contained, we build off several of these ideas to study the following conjecture of Mason \cite{Mas72}.
\begin{conjecture}[Mason's Conjecture]\label{conj:Mason}
For an $n$-element matroid $M$ with $\I_k$ independent sets of size $k$, 
\begin{forms}
    \item\label{mason:i} $\I_{k}^{2} \geq \I_{k-1} \cdot \I_{k+1}$ (log-concavity),
    \item\label{mason:ii} $\I_{k}^{2} \geq \left(1 + \frac{1}{k}\right) \cdot \I_{k-1} \cdot \I_{k+1}$,
    \item\label{mason:iii} $\I_{k}^{2} \geq \left(1 + \frac{1}{k}\right) \cdot \left(1 + \frac{1}{n-k}\right) \cdot \I_{k-1} \cdot \I_{k+1}$ (ultra log-concavity).
\end{forms}
\end{conjecture}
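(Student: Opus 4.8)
The plan is to deduce \Cref{conj:Mason} --- in fact only \ref{mason:iii} needs proof, since $\bigl(1+\tfrac1k\bigr)\bigl(1+\tfrac1{n-k}\bigr)\ge 1+\tfrac1k\ge 1$ while $\I_{k-1}\I_{k+1}\ge 0$ --- from the complete log-concavity of one polynomial attached to $M$. Let $g_M(y_1,\dots,y_n)=\sum_I\prod_{i\in I}y_i$, the sum over independent sets $I$ of $M$, and let $\bar g_M(y_0,y_1,\dots,y_n)=\sum_{I\text{ independent}}y_0^{\,n-|I|}\prod_{i\in I}y_i$ be its homogenization to degree $n$ (the number of elements of $M$). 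I would first set up the theory of completely log-concave polynomials: this class of homogeneous polynomials with nonnegative coefficients is closed under products, under the operators $\partial_{y_i}$, and under substituting for each variable a nonnegative linear form in fresh variables; and a bivariate homogeneous completely log-concave polynomial $\sum_k a_kx^ky^{d-k}$ satisfies $a_k^2\ge\bigl(1+\tfrac1k\bigr)\bigl(1+\tfrac1{d-k}\bigr)a_{k-1}a_{k+1}$. Granting that $\bar g_M$ is completely log-concave, the admissible substitution $y_1=\dots=y_n=t$ gives $\bar g_M(y_0,t,\dots,t)=\sum_{k=0}^n\I_k\,y_0^{\,n-k}t^k$, which is completely log-concave and bivariate of degree $n$, and the displayed bivariate inequality is precisely \ref{mason:iii}. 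It is important that we homogenize to degree $n$ rather than to the rank $r$: already for $U_{2,3}$ the rank-$r$ homogenization $y_0^2+y_0(y_1+y_2+y_3)+y_1y_2+y_1y_3+y_2y_3$ has a Hessian with two positive eigenvalues, so complete log-concavity genuinely fails there.

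Thus everything reduces to showing that $\bar g_M$ is completely log-concave, for which I would invoke the characterization that a homogeneous polynomial of degree $\ge 2$ with nonnegative coefficients is completely log-concave if and only if (a) its support is M-convex, and (b) every partial derivative of order (degree $-2$) --- a quadratic form --- has a Hessian with at most one positive eigenvalue. Part (a) holds because $\supp(\bar g_M)=\{(n-|I|)e_0+\mathbf{1}_I:I\text{ independent}\}$ is the image, under the linear map aggregating the homogenizing coordinate, of the set of bases of the matroid obtained from $M$ by adjoining $n$ coloops and truncating to rank $n$; aggregating coordinates preserves M-convexity, and M-convexity of $\supp(\bar g_M)$ passes to all of its derivatives.

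Part (b) is the real work. A multi-index $\alpha$ with $|\alpha|=n-2$ must have the form $\alpha=\alpha_0e_0+\mathbf{1}_A$ with $A\subseteq[n]$, else $\partial^\alpha\bar g_M=0$; if $A$ is dependent the derivative also vanishes, while if $A$ is independent then $\prod_{a\in A}\partial_{y_a}\bar g_M=\bar g_{M/A}$, the degree-$m$ homogenization of the independence polynomial of the contraction $M':=M/A$ with $m:=n-|A|$ (and $\alpha_0=m-2$), so
\[
  \partial^\alpha\bar g_M=\partial_{y_0}^{\,m-2}\bar g_{M/A}=\frac{m!}{2}\,y_0^2+(m-1)!\!\!\sum_{j\text{ non-loop of }M'}\!\!\!\!y_0y_j+(m-2)!\!\!\sum_{\{j,k\}\text{ independent in }M'}\!\!\!\!\!\!y_jy_k .
\]
Restricted to the non-loops of $M'$, the Hessian of this quadratic is $(m-2)!$ times $\widetilde H=\begin{pmatrix} m(m-1)&(m-1)\mathbf{1}^{\top}\\ (m-1)\mathbf{1}&J-B\end{pmatrix}$, with $J$ the all-ones matrix and $B=\sum_c\mathbf{1}_{C_c}\mathbf{1}_{C_c}^{\top}$ summed over the parallel classes $C_1,\dots,C_p$ of $M'$. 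Taking $v=\bigl(\sqrt{m(m-1)},\ \sqrt{\tfrac{m-1}{m}}\,\mathbf{1}\bigr)^{\top}$ one checks $\widetilde H-vv^{\top}=\begin{pmatrix}0&0\\0&\tfrac1mJ-B\end{pmatrix}\preceq 0$, since $B=\sum_c\mathbf{1}_{C_c}\mathbf{1}_{C_c}^{\top}\succeq\tfrac1p\bigl(\sum_c\mathbf{1}_{C_c}\bigr)\bigl(\sum_c\mathbf{1}_{C_c}\bigr)^{\top}=\tfrac1pJ\succeq\tfrac1mJ$ by Cauchy--Schwarz and the bound $p\le m$ (a matroid has at most as many parallel classes as elements). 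Hence $\widetilde H$ is a rank-one positive semidefinite matrix plus a negative semidefinite one and has at most one positive eigenvalue, which proves (b), and so $\bar g_M$ is completely log-concave.

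I expect the last step to be the main obstacle: it is precisely where the choice of homogenization degree is forced (with degree $r$ the bound ``$p\le m$'' would become the false ``$p\le\rank(M')$'', as $U_{2,3}$ shows), and it is the only place where a genuinely matroidal fact --- that every contraction of $M$ is a matroid, so its parallel classes number at most its elements --- is used. The reduction in the first paragraph and the M-convexity of the support are comparatively routine once the completely-log-concave framework is available; assembling that framework self-containedly (the closure properties, the bivariate inequality, and the M-convex-support/quadratic-derivative characterization) is the other substantial component.
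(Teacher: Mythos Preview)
Your proposal is correct and follows essentially the same route as the paper: homogenize the independence polynomial to degree $n$, prove the homogenization is completely log-concave by reducing to quadratic partial derivatives, identify those quadratics with $\partial_{y_0}^{m-2}\bar g_{M/A}$ for contractions $M/A$, and verify the single-positive-eigenvalue condition via the parallel-class decomposition and Cauchy--Schwarz (your computation $\widetilde H - vv^{\top}\preceq 0$ is a repackaging of the paper's check that $nB-(n-1)\mathbf{1}\mathbf{1}^{\top}\preceq 0$); then substitute $y_1=\dots=y_n=t$ and read off ultra log-concavity from the bivariate case.

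The one genuine difference is the side condition in the reduction to quadratics. The paper proves and uses an \emph{indecomposability} criterion (its \cref{thm:CLCQuad}): if every $\partial^{\alpha}f$ with $|\alpha|\le d-2$ is indecomposable and every quadratic $\partial^{\alpha}f$ is log-concave, then $f$ is completely log-concave; indecomposability of $\partial_{y}^{k}\partial^{J}g_M$ is then verified in one line because the associated graph is a star centered at $y$. You instead invoke the M-convex-support criterion from the Br\"and\'en--Huh/Lorentzian framework and argue M-convexity via an auxiliary matroid (adjoin $n$ coloops, truncate, aggregate). Both criteria are valid and both funnel into the identical Hessian computation; the paper's version is slightly more self-contained here (no polymatroid facts needed), while yours makes the connection to Lorentzian polynomials explicit. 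Your observation that homogenizing to degree $\rank(M)$ fails already for $U_{2,3}$ is exactly the reason the paper homogenizes to degree $n$.
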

Note that \cref{mason:i,mason:ii,mason:iii} are written in increasing strength. \Textcite{AHK18} proved \cref{mason:i} using techniques from Hodge theory and algebraic geometry. Building on this, \textcite{HSW18} proved \cref{mason:ii}. Prior to our work, \cref{mason:iii} was only proven to hold when $n \leq 11$ or $k\leq 5$ \cite{KN11}. We refer to \cite{Sey75, Dow80, Mah85, Zha85, HK12, HS89, Len13} for other partial results on Mason's conjecture. Here, we give a self-contained proof of \cref{mason:iii}.

\begin{theorem}\label{thm:mason}
	For a matroid $M$ on $n$ elements with $\I_k$ independent sets of size $k$, 	
	the sequence  $\I_{0}, \I_{1},\dots,\I_{n}$ is ultra log-concave. That is, 
	 for $1<k<n$, 
		\[ \parens*{\frac{\I_{k}}{\binom{n}{k}}}^2 \ \ \geq  \ \ \frac{\I_{k-1}}{\binom{n}{k-1}} \cdot \frac{\I_{k+1}}{\binom{n}{k+1}} . \]
\end{theorem}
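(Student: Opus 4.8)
The plan is to derive \cref{thm:mason} from the property advertised in the abstract — that completely log-concave polynomials have ultra-log-concave bivariate restrictions — by exhibiting the right polynomial and proving it is completely log-concave. Let $M$ be a matroid on ground set $[n]$, let $\I$ denote the collection of all independent sets of $M$ (so that $\I_k$ counts the members of $\I$ of size $k$), and introduce a homogenizing variable $y_0$, setting $f_M(y_0, y_1, \dots, y_n) = \sum_{S \in \I} y_0^{\,n - \abs{S}} \prod_{i \in S} y_i$, a homogeneous polynomial of degree $n$ in $n+1$ variables. Substituting $y_0 \mapsto z$ and $y_i \mapsto y$ for $1 \le i \le n$ is a nonnegative linear substitution into two variables, so once $f_M$ is known to be completely log-concave the polynomial $f_M(z, y, \dots, y) = \sum_{k=0}^{n} \I_k \, y^{k} z^{n-k}$ has ultra-log-concave coefficients; that is exactly the assertion of \cref{thm:mason}. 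So everything reduces to proving the following: \emph{for every matroid $M$, the polynomial $f_M$ is completely log-concave.}

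To prove this I would invoke the Hessian characterization of complete log-concavity, which one should set up first: a degree-$d$ homogeneous polynomial with nonnegative coefficients is completely log-concave if and only if each of its order-$(d-2)$ partial derivatives — a quadratic form — has a Hessian with at most one positive eigenvalue. (The nonnegativity of the coefficients of all lower-order derivatives, which the characterization also requires, is automatic here since $f_M$ has $0/1$ coefficients; and for quadratics the Hessian condition is just the elementary fact that a quadratic with nonnegative coefficient matrix is log-concave on the positive orthant precisely when that matrix has at most one positive eigenvalue.) The crucial structural observation is that second-order partials of $f_M$ are again independent-set homogenizations of contractions: because $f_M$ is multilinear in $y_1, \dots, y_n$, for any $S \subseteq [n]$ one checks directly that $\partial_{y_S} f_M = f_{M/S}$ if $S \in \I$ and $\partial_{y_S} f_M = 0$ otherwise, where $\partial_{y_S} = \prod_{i \in S} \partial_{y_i}$ and $M/S$ is the contraction, a matroid on $n - \abs{S}$ elements. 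Consequently every order-$(n-2)$ partial derivative of $f_M$ is of the form $\partial_{y_0}^{\,m-2} f_N$ for some contraction $N = M/S$ on $m := n - \abs{S} \ge 2$ elements (the $y_i$-exponents can only be $0$ or $1$). So it suffices to show: for every matroid $N$ on $m \ge 2$ elements, the quadratic form $\partial_{y_0}^{\,m-2} f_N$ has a Hessian with at most one positive eigenvalue.

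Here the matroid theory essentially evaporates. Expanding, $\partial_{y_0}^{\,m-2} f_N$ equals, up to the positive factor $(m-2)!$, the quadratic $\binom{m}{2} y_0^2 + (m-1) \sum_{i \text{ non-loop}} y_0 y_i + \sum_{\{i,j\} \in \I(N)} y_i y_j$; since a two-element subset of a matroid is independent exactly when it contains no loop and is not a parallel pair, this form depends on $N$ only through its loops and its parallel classes $P_1, \dots, P_t$. Setting $\tau_k = \sum_{i \in P_k} y_i$ and $\sigma = \sum_k \tau_k$, a short calculation rewrites twice this quadratic as $\bigl( (m-1) y_0 + \sigma \bigr)^2 + (m-1) y_0^2 - \sum_{k=1}^t \tau_k^2$. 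Since the loops contribute nothing and within each parallel class only $\tau_k$ appears, the signature of the Hessian is that of the $(t+1) \times (t+1)$ Gram matrix of this form in the coordinates $(y_0, \tau_1, \dots, \tau_t)$, namely $\left( \begin{smallmatrix} a^2 + a & a \mathbf{1}^{\top} \\ a \mathbf{1} & J - I \end{smallmatrix} \right)$ with $a = m - 1$ and $J$ the $t \times t$ all-ones matrix. On the subspace of vectors $(0, v)$ with $v \perp \mathbf{1}$ this matrix acts as $-I$, and on the orthogonal $2$-dimensional invariant subspace it has determinant $a(t - m)$. A matroid on $m$ elements has at most $m$ parallel classes, so $t \le m$ and this determinant is $\le 0$; hence the $2$-dimensional block contributes at most one positive eigenvalue and the whole matrix has at most one positive eigenvalue, as needed.

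The step I expect to be the real crux is choosing what to prove completely log-concave: working with bases, or with the non-homogenized independent-set polynomial, does not lead anywhere clean, whereas the homogenized independent-set polynomial is exactly right because (i) its order-$(d-2)$ derivatives are themselves homogenized independent-set polynomials of contractions, and (ii) those derivatives only see the rank-$\le 2$ data of the contraction, so the entire matroidal content collapses to the trivial fact that a matroid on $m$ elements has at most $m$ parallel classes. Everything else is either the Hessian characterization of complete log-concavity — which one could alternatively sidestep by an induction on $\abs{E(M)}$ using the deletion–contraction identity $f_M = y_0 f_{M \setminus e} + y_e f_{M/e}$ — or routine linear algebra and the handling of degenerate cases (loops, $m \le 2$, or $\I_{k \pm 1} = 0$), all of which are immediate.
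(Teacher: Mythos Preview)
Your outline is essentially the paper's proof: the same homogenized independent-set polynomial $g_M$, the same reduction of its order-$(n-2)$ partials to contractions, the same observation that only the rank-$\le 2$ data (loops and parallel classes) matters for the resulting quadratic, and the same specialization to the bivariate polynomial to conclude. Your signature computation for the quadratic is a pleasant variant of the paper's argument, which instead shows directly via Cauchy--Schwarz that $nB - (n-1)\1\1^{\intercal}$ is negative semidefinite; both boil down to the inequality $t \le m$.

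There is, however, one genuine gap. The ``Hessian characterization'' you invoke---that a homogeneous polynomial with nonnegative coefficients is completely log-concave iff every order-$(d-2)$ partial derivative has Hessian with at most one positive eigenvalue---is false as stated. The polynomial $z_1^3 + z_2^3$ has order-$1$ partials $3z_1^2$ and $3z_2^2$, each of whose Hessians has exactly one positive eigenvalue, yet $z_1^3 + z_2^3$ is not even log-concave on $\R_{\ge 0}^2$ (compare its value at $(1,1)$ with the geometric mean of its values at $(2,0)$ and $(0,2)$). The correct criterion, which the paper proves as \cref{thm:CLCQuad}, additionally requires that every $\partial^{\alpha} f$ with $\abs{\alpha} \le d-2$ be \emph{indecomposable} (equivalently, in the Br\"and\'en--Huh language, that the support be M-convex). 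For $g_M$ this is immediate---every non-loop variable $z_i$ appearing in $\partial_y^k \partial^J g_M$ shares a monomial with $y$, so the associated graph is a star---but you do need to say it; without the check, the reduction to quadratics does not go through.
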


We prove \cref{thm:mason} in \cref{sec:mason}.
The main tool we use will be polynomials that are log-concave as functions on the positive orthant. 
For $i\in [n]$, let $\partial_i$ or $\partial_{z_i}$ denote the partial derivative operator that maps a polynomial $f$ to its partial derivative with respect to $z_i$. For a vector $v\in \R^n$, we let $D_v$ denote the directional derivative operator in direction $v$,
\[ D_v=\sum_{i=1}^n v_i \partial_i. \]
We call a polynomial $f\in \R[z_1,\dots,z_n]$ log-concave over $\R_{\geq 0}^n$ if $f$ is nonnegative and log-concave
 as a function over $\R_{\geq 0}^n$, or in other words if for every $u, v\in \R_{\geq 0}^n$ and $\lambda\in [0,1]$, we have $f(u), f(v)\geq 0$ and
\[ f(\lambda u+(1-\lambda)v)\geq f(u)^\lambda \cdot f(v)^{1-\lambda}. \]
Note that the zero polynomial is also log-concave. 
If $f(v)$ is positive for some $v\in \R_{\geq 0}^n$, 
then we call $f$ log-concave at $z=v$ if the Hessian of its $\log$ at $v$ is negative semidefinite. 
It is easy to see from the definition that for any fixed $d$ and $n$, the set of
 polynomials of degree at most $d$ in $n$ variables that are log-concave on $\R_{\geq 0}^n$ is closed in the Euclidean topology on $\R[x_1, \hdots, x_n]_{\leq d}$.  Also, a nonzero polynomial is log-concave over $\R_{\geq 0}^n$ 
 if and only if it is log-concave at every point of $\R_{>0}^n$.
\begin{definition}\label{def:clc}
A polynomial $f\in \R[z_1,\dots,z_n]$ is \textbf{completely log-concave} if for every set of nonnegative vectors $v_1,\dots,v_k\in \R_{\geq 0}^n$, the polynomial $D_{v_1}\dots D_{v_k} f$ is nonnegative and log-concave over $\R_{\geq 0}^n$. 
\end{definition}
Completely log-concave polynomials were introduced in \cite{AOV18} based on similar notions of strongly log-concave and Alexandrov-Fenchel polynomials first studied in \cite{Gur08}. 
In this paper, we prove the properties of complete log-concavity necessary for \cref{thm:mason} and defer a more detailed treatment of completely log-concave polynomials to a future article.

The main ingredient of the proof of \cref{thm:mason} is to show that the homogenization of the generating polynomial of all independent sets of $M$ is completely log-concave, namely that the polynomial 
$$ g_M(y,z_1,\dots,z_n)=\sum_{I\in\I} y^{n-\card{I}}\prod_{i\in I} z_i$$
is completely log-concave. Then, we use this to show that the bivariate restriction $f_M(y,z)=\sum_{k=0}^r \I_{k} y^{n-k}z^k$ is completely log-concave.
Finally, we derive \cref{thm:mason} from the latter fact based on an observation of Gurvits \cite{Gur08} 
on the coefficients of completely log-concave polynomials.

\subsection{Independent work} 
In a related upcoming work, Br\"and\'en and Huh have independently developed methods that overlap with our work. In particular they also prove the strongest version of Mason's conjecture. 

\subsection{Spectral negative dependence}
It is well-known that the uniform distribution over all spanning trees of a graph is negatively correlated and more generally negatively associated, see \cite{Pem00} for background. This fact more generally extends to regular matroids. Prior to our work many researchers tried to approach  Mason's conjecture through the lens of negative correlation \cite{SW75, Wag08, BBL09, KN10, KN11}. However, for many matroids the uniform distribution on bases is not negatively correlated and furthermore, negative correlation does not necessarily imply log-concavity of its rank sequences \cite{Wag08}.

Consider the polynomial $p_M=\sum_{B} \prod_{i\in B} z_i$, where the sum is over all bases of the matroid $M$. Then the negative correlation property is equivalent to all off-diagonal entries of the Hessian of $\log p_M$ being non-positive when evaluated at the 
all-ones vector $\1 = (1,\hdots, 1)$, i.e. 
$$ (\nabla^2 \log p_M(\1))_{i,j} = p_M(\1)\cdot \partial_i\partial_j p(\1)- \partial_i p_M(\1)\cdot\partial_j p_M(\1) \leq 0,$$
for all $1\leq i,j\leq n$, $i\neq j$.
This inequality holds for regular matroids but not necessarily for even linear matroids. 

In \cite{AOV18} it was observed that for any matroid $M$, the polynomial $p_M$ is completely log-concave. This means that even though $\nabla^2 \log p_M(\1)$ can have positive entries, all of its eigenvalues, and eigenvalues of Hessian of the $\log$ of all partials of $p_M$, are non-positive. We call this property, \emph{spectral negative dependence}. In this paper, we show that for any matroid, the homogenization of the generating polynomial of all independent sets, namely $g_M$ also satisfies spectral negative dependence. Furthermore, spectral negative dependence is enough to prove the strong form of log-concavity of rank sequences as conjectured by Mason.

\paragraph{Acknowledgements.} 
Part of this work was started while the first and last authors were visiting the Simons Institute for the Theory of Computing. It was partially supported by the DIMACS/Simons Collaboration on Bridging Continuous and Discrete Optimization through NSF grant CCF-1740425. Shayan Oveis Gharan and Kuikui Liu are supported by the NSF grant CCF-1552097 and ONR-YIP grant N00014-17-1-2429. Cynthia Vinzant was partially supported by the National Science Foundation grant DMS-1620014.

\section{Preliminaries}\label{sec:prelim}

A polynomial $f\in \R[z_1,\dots,z_n]$ is homogeneous of degree $d$ if every monomial of $f$ has degree $d$, or equivalently $f(\lambda\cdot z_1,\dots,\lambda\cdot z_n)=\lambda^d f(z_1,\dots,z_n)$ for all $\lambda \in \R$. We will use $\nabla f$ to denote the gradient of $f$ and $\nabla^2 f$ to denote its Hessian matrix. 

We use $[n]$ to refer to $\set{1,\dots,n}$. When $n$ is clear from context, for a set $S\subseteq [n]$, we let $\1_S\in \R^n$ denote the indicator vector of $S$. For variables $z_{1},\dots,z_{n}$ and $S \subseteq [n]$, we let $z^{S} $ denote the monomial $\prod_{i \in S} z_{i}$. Similarly, for an integer vector $\alpha = (\alpha_1, \hdots, \alpha_n) \in \Z_{\geq 0}^n$ or a subset $S\subseteq [n]$, we denote differential operators 
\[\partial^{\alpha}=\prod_{i=1}^n\partial_i^{\alpha_i}   \ \ \text{ and } \ \  \partial^{S} =\partial^{\1_{S}}=\prod_{i\in S} \partial_i.  \]
Note that if $f$ is homogeneous of degree $d$, then $\partial^{\alpha}f$ is homogenous of degree $d-\abs{\alpha}$ where $\abs{\alpha} = \sum_{i=1}^n\alpha_i$.

A symmetric matrix $Q\in\R^{n\times n}$, alternatively viewed as a quadratic form $z\mapsto z^\intercal Q z$, is positive semidefinite if $v^\intercal Q v\geq 0$ for all $v\in\R^n$ and negative semidefinite if $v^\intercal Q v\leq 0$ for all $v\in\R^n$. If these inequalities are strict for $v\neq 0$, then $Q$ is positive or negative definite, respectively. There are several equivalent definitions. In particular, a matrix is positive semidefinite if and only if all of its eigenvalues are nonnegative, which occurs if and only if the all its principal minors are nonnegative. 
Since $Q$ is negative semidefinite if and only if $-Q$ is positive semidefinite, these translate into analogous characterizations of negative semidefinite-ness. 

\subsection{Matroids}

Formally, a \textbf{matroid} $M=([n],\I)$ consists of a ground set $[n]$ and a nonempty collection $\I$ of \emph{independent} subsets of $[n]$ satisfying the following two conditions: 
\begin{conds}
	\item If $S\subseteq T$ and $T\in \I$, then $S\in \I$.
 	\item If $S,T\in \I$ and $\card{T}>\card{S}$, then there exists an element $i\in T \setminus S$ such that $S\cup \set{i}\in \I$.
\end{conds}

The \textbf{rank}, denoted by $\rank(S)$, of a subset $S\subseteq [n]$ is the size of the largest independent set contained in $S$ 
and the rank of $M$ is defined as $\rank([n])$.  An element $i\in [n]$ is called a \textbf{loop} if $\set{i}\notin \I$, and two 
elements $i,j\in [n]$ are called \text{parallel} if neither is a loop and $\rank(\set{i,j}) = 1$. One can check that parallelism defines an equivalence relation on the non-loops of $M$, which partitions the set of non-loops into parallelism classes. 

For a matroid $M$ and an independent set $S\in \I$, the \textbf{contraction}, $M/S$, of $M$ by $S$ is the matroid on ground set $[n]\setminus S$ with independent sets $\set{T \subseteq [n]\setminus S \given  S\cup T\in \I}$. 
In particular, the rank of $M/S$ equals $\rank(M) - \card{S}$. 
See \cite{Oxl11} for more details and general reference. 

\subsection{Log-concave polynomials}
In  \cite{AOV18}, it was shown that a homogeneous polynomial $f \in \R[z_1,\dots,z_n]$ with nonnegative coefficients is log-concave at a point $z=a$ if and only if its Hessian $\nabla^2 f$ has at most one positive eigenvalue at $z=a$. 
One can relate this directly to the negative semidefinite-ness of the Hessian of $\log(f)$. Indeed, there are several useful equivalent characterizations of this condition:

\begin{lemma}\label{lem:LC_Quad}
Let $f\in \R[z_1, \hdots, z_n]$ be homogeneous of degree $d\geq 2$ with nonnegative coefficients. Fix a point $a\in \R_{\geq 0}^n$ with $f(a)\neq 0$, and let 
$Q = \eval{\nabla^2 f}_{z=a}$. 
The following are equivalent:
\begin{conds}
\item\label{cond:1} $f$ is log-concave at $z=a$,
\item\label{cond:2} $z\mapsto z^{\intercal}Qz$ is negative semidefinite on $(Qa)^{\perp}$,
\item\label{cond:3} $z\mapsto z^{\intercal}Qz$ is negative semidefinite on $(Qb)^{\perp}$ for every $b\in\R^n_{\geq 0}$ such that $Qb\neq 0$, 
\item\label{cond:4} $z\mapsto z^{\intercal}Qz$ is negative semidefinite on some linear space of dimension $n-1$, and
\item\label{cond:5} the matrix $(a^{\intercal}Qa) Q - (Qa)(Qa)^{\intercal}$ is negative semidefinite.
\end{conds}
For $d\geq 3$, these are also equivalent to the condition 
\begin{conds}[resume]
\item\label{cond:6} $D_af$ is log-concave at $z=a$. 
\end{conds}
\end{lemma}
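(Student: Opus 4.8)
The plan is to show that each of \cref{cond:1,cond:3,cond:4,cond:5,cond:6} is equivalent to \cref{cond:2}; throughout, write $(\star)$ for the assertion of \cref{cond:2}, namely that $z\mapsto z^{\intercal}Qz$ is negative semidefinite on $(Qa)^{\perp}$. First I would record the consequences of homogeneity. Since $f$ has nonnegative coefficients and $a\in\R_{\geq 0}^{n}$ with $f(a)\neq 0$, in fact $f(a)>0$, and each entry $Q_{ij}=\partial_i\partial_j f(a)$ is nonnegative, so $c^{\intercal}Qc\geq 0$ for every $c\in\R_{\geq 0}^{n}$. Euler's identity applied to $f$ and to each $\partial_i f$ gives $a^{\intercal}\nabla f(a)=d\,f(a)$ and $Qa=(d-1)\nabla f(a)$, hence $a^{\intercal}Qa=d(d-1)f(a)>0$; in particular $Qa\neq 0$, so $(Qa)^{\perp}$ is a hyperplane and $\R^{n}=\R a\oplus(Qa)^{\perp}$.

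The computational heart is a single identity. For a scalar $c$ set $M_{c}:=(a^{\intercal}Qa)\,Q-c\,(Qa)(Qa)^{\intercal}$. Expanding $v=ta+w$ with $t\in\R$, $w\in(Qa)^{\perp}$, and using $(Qa)^{\intercal}w=0$ and $a^{\intercal}(Qa)=a^{\intercal}Qa$, one gets
\[ v^{\intercal}M_{c}v=(1-c)(a^{\intercal}Qa)^{2}t^{2}+(a^{\intercal}Qa)\,w^{\intercal}Qw. \]
Since $a^{\intercal}Qa>0$ and the two summands involve the independent parameters $t$ and $w$, this yields: $M_{c}$ is negative semidefinite iff $c\geq 1$ and $(\star)$ holds. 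Taking $c=1$ gives the equivalence of \cref{cond:5} with $(\star)$. For \cref{cond:1}, I would compute $\nabla^{2}\log f$ at $a$ as $\tfrac{1}{f(a)}Q-\tfrac{1}{f(a)^{2}}\nabla f(a)\nabla f(a)^{\intercal}$ and substitute $\nabla f(a)=\tfrac{1}{d-1}Qa$ and $f(a)=\tfrac{1}{d(d-1)}a^{\intercal}Qa$, which exhibits this Hessian as a positive multiple of $M_{d/(d-1)}$; since $d\geq 2$ gives $d/(d-1)\geq 1$, we obtain the equivalence of \cref{cond:1} with $(\star)$. (Alternatively one may invoke the fact from \cite{AOV18} that \cref{cond:1} holds iff $Q$ has at most one positive eigenvalue.)

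For the remaining conditions I would use two standard facts about a symmetric $Q$: (a) $Q$ is negative semidefinite on some $(n-1)$-dimensional subspace iff $Q$ has at most one positive eigenvalue (both directions via the Courant--Fischer formula for $\lambda_{2}$); and (b) if $Q$ has at most one positive eigenvalue and $v^{\intercal}Qv>0$, then $Q$ is negative semidefinite on $(Qv)^{\perp}$ — otherwise some $w\perp Qv$ has $w^{\intercal}Qw>0$, so $v,w$ are linearly independent, $Q$ is positive definite on $\mathrm{span}\{v,w\}$, and thus has two positive eigenvalues, a contradiction. Now \cref{cond:2} implies \cref{cond:4} (take $(Qa)^{\perp}$), and \cref{cond:3} implies \cref{cond:2} (apply it to the nonnegative vector $a$, legitimate since $Qa\neq 0$). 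Conversely \cref{cond:4} plus (a) forces $Q$ to have at most one positive eigenvalue, hence exactly one since $a^{\intercal}Qa>0$, and then (b) with $v=a$ gives $(\star)$. Finally, for \cref{cond:2} $\Rightarrow$ \cref{cond:3}: $Q$ has exactly one positive eigenvalue as above, and for $u\in\R_{\geq 0}^{n}$ with $Qu\neq 0$ we are done by (b) if $u^{\intercal}Qu>0$; if instead $u^{\intercal}Qu=0$ (note $u^{\intercal}Qu\geq 0$ as $Q$ has nonnegative entries), I would perturb by $u_{\varepsilon}:=u+\varepsilon a\in\R_{\geq 0}^{n}$, for which $u_{\varepsilon}^{\intercal}Qu_{\varepsilon}=2\varepsilon\,a^{\intercal}Qu+\varepsilon^{2}\,a^{\intercal}Qa>0$ when $\varepsilon>0$, so by (b) $Q$ is negative semidefinite on $(Qu_{\varepsilon})^{\perp}$; letting $\varepsilon\to 0^{+}$ and using $Qu_{\varepsilon}\to Qu\neq 0$, the projection $w_{\varepsilon}$ of any fixed $w\perp Qu$ onto $(Qu_{\varepsilon})^{\perp}$ converges to $w$, so $w^{\intercal}Qw=\lim_{\varepsilon\to 0^{+}}w_{\varepsilon}^{\intercal}Qw_{\varepsilon}\leq 0$. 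I expect this degenerate case — $u^{\intercal}Qu=0$ with $Qu\neq 0$, which really can occur (e.g.\ $f=z_{1}z_{2}$, $u=\1_{\{1\}}$) — to be the only delicate point; everything else is bookkeeping with Euler's relations.

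It remains to treat \cref{cond:6}, where the hypothesis $d\geq 3$ enters. Here $D_{a}f=a^{\intercal}\nabla f$ is homogeneous of degree $d-1\geq 2$ with nonnegative coefficients, and $(D_{a}f)(a)=a^{\intercal}\nabla f(a)=d\,f(a)\neq 0$, so the equivalence of \cref{cond:1} with \cref{cond:2}, already proved, applies to $D_{a}f$. From $\partial_{i}\partial_{j}(D_{a}f)=D_{a}(\partial_{i}\partial_{j}f)$ and Euler's identity for the degree-$(d-2)$ polynomial $\partial_{i}\partial_{j}f$, the Hessian of $D_{a}f$ at $a$ equals $(d-2)Q$; since $d-2>0$ this is negative semidefinite on $\bigl((d-2)Qa\bigr)^{\perp}=(Qa)^{\perp}$ exactly when $Q$ is, so $D_{a}f$ is log-concave at $a$ iff $(\star)$ holds, i.e.\ \cref{cond:6} is equivalent to \cref{cond:2}. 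This closes the circle of equivalences.
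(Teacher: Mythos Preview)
Your argument is correct and close in spirit to the paper's, but the route differs in two noteworthy places. First, your single identity for $M_c=(a^{\intercal}Qa)Q-c(Qa)(Qa)^{\intercal}$ under the decomposition $v=ta+w$ with $w\in(Qa)^{\perp}$ handles \cref{cond:1}, \cref{cond:2}, and \cref{cond:5} simultaneously; the paper instead proves the chain $(\ref{cond:1})\Rightarrow(\ref{cond:2})\Rightarrow(\ref{cond:4})\Rightarrow(\ref{cond:5})\Rightarrow(\ref{cond:1})$ by separate short steps. Second, for the passage from \cref{cond:4} back to \cref{cond:2}/\cref{cond:3} you invoke Courant--Fischer (``at most one positive eigenvalue'') together with your fact~(b); the paper deliberately avoids any eigenvalue counting and instead proves $(\ref{cond:4})\Rightarrow(\ref{cond:5})$ directly via a $2\times 2$ determinant trick: for arbitrary $b$, the matrix $P=[\,a\ b\,]$ has column span meeting the given $(n-1)$-dimensional subspace, so $P^{\intercal}QP$ is not positive definite, forcing $\det(P^{\intercal}QP)\leq 0$, which is exactly $b^{\intercal}\bigl((a^{\intercal}Qa)Q-(Qa)(Qa)^{\intercal}\bigr)b\leq 0$. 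Your treatment of \cref{cond:6} via $\nabla^{2}(D_af)(a)=(d-2)Q$ is the same as the paper's.

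One point where your route is arguably more careful: for $(\ref{cond:2})\Rightarrow(\ref{cond:3})$ the paper reapplies the already-established equivalence to the quadratic $\tfrac{1}{2}z^{\intercal}Qz$ at the point $b$, which tacitly uses $b^{\intercal}Qb\neq 0$; your perturbation $u_{\varepsilon}=u+\varepsilon a$ explicitly disposes of the degenerate case $u^{\intercal}Qu=0$ with $Qu\neq 0$. So your proof is sound; it trades the paper's self-contained $2\times 2$ argument for the eigenvalue characterization, and in exchange the $M_c$ computation gives a cleaner simultaneous handling of \cref{cond:1}, \cref{cond:2}, \cref{cond:5}.
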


One can check that this condition is also equivalent to $Q$ having at most one positive eigenvalue, but we do not rely on 
this fact and leave its proof to the interested reader.

\begin{proof} 
Euler's identity states that for a homogeneous polynomial $g$ of degree $d$, $\sum_{i=1}^nz_i\partial_{i}g$ equals $d\cdot g$. 
Using this on $f$ and $\partial_{j}f$ gives that $ Qa=(d-1)\cdot\nabla f(a)$ and $a^\intercal Qa= d(d-1)\cdot f(a) $. 
The Hessian of $\log(f)$ at $z=a$ then equals 
\[
\eval{\nabla^2(\log(f))}_{z=a} \  = \
\eval{\parens*{\frac{ f \cdot \nabla^2f-  \nabla f\nabla f^\intercal}{f^2}}}_{z=a} \
 = \  
d(d-1)\frac{a^\intercal Qa  \cdot Q-  \frac{d}{d-1} (Qa)(Qa)^\intercal  }{(a^{\intercal}Qa)^2}.
\]
We can also conclude that $a^\intercal Qa = d(d-1)\cdot f(a)>0$ and that the vector $Qa$ is nonzero. 

(\cref{cond:1} $\Rightarrow$ \cref{cond:2})  
 If $f$ is log-concave at $z=a$, then the Hessian of $\log(f(z))$ at $z=a$ is negative semidefinite. 
Restricted to the linear space $(Qa)^{\perp} = \set{z\in \R^n \given z^{\intercal}Qa=0}$, the formula above simplifies to $\frac{ d(d-1)}{a^\intercal Qa} \cdot Q$, meaning that 
 $z\mapsto z^{\intercal}Qz$ is negative semidefinite on this linear space.

(\cref{cond:2} $\Rightarrow$ \cref{cond:4})  Since $Qa$ is nonzero, $(Qa)^{\perp}$ has dimension $n-1$. 

(\cref{cond:4} $\Rightarrow$ \cref{cond:5}) Suppose that $z\mapsto z^{\intercal}Qz$ is negative semidefinite on an $(n-1)$-dimensional linear space $L$. 
Let $b\in \R^n$ and consider the $n\times 2$ matrix $P$ with columns $a$ and $b$. Then 
	\[
		P^\intercal Q P=
		\begin{bmatrix} 
			a^\intercal Qa & a^\intercal Qb \\ 
			b^\intercal Qa & b^\intercal Qb
		\end{bmatrix}.
	\]
If $P$ has rank one, then so does $P^\intercal Q P$, meaning that $\det(P^\intercal Q P)=0$. 
Otherwise $P$ has rank two and its column-span intersects $L$ nontrivially. 
This means there is a vector $v\in \R^2$
for which $Pv \in L$ is nonzero and $(Pv)^{\intercal}Q(Pv) \leq 0$. 
From this we see that $P^\intercal Q P$ is not positive definite. 
On the other hand, since the diagonal entry $a^\intercal Qa$ is positive, 
$P^\intercal Q P$ is not negative definite. 
In either case, we then find that 
\[		\det(P^\intercal Q P)=(a^\intercal Q a) \cdot(b^\intercal Q b)  - (b^\intercal Qa) \cdot (a^\intercal Qb)\leq 0.
\]
Thus $b^\intercal ((a^\intercal Q a) \cdot Q - (Qa)(Qa)^\intercal) b \leq 0$ for all $b\in \R^n$. 	

(\cref{cond:5}  $\Rightarrow$ \cref{cond:1}) Suppose $(a^{\intercal}Qa) \cdot Q -  (Qa)(Qa)^{\intercal}$ is negative semidefinite. 
Further subtracting $\frac{1}{d-1}(Qa)(Qa)^{\intercal}$ 
and scaling by the positive number $\frac{d(d-1)}{(a^{\intercal}Qa)^2}$ results in $\eval{\nabla^2(\log(f))}_{z=a} $,  as above, which must therefore also be negative semidefinite. 

(\cref{cond:3} $\Leftrightarrow$ \cref{cond:4}) Both conditions depend only on the matrix $Q$. We can then use the equivalence (\cref{cond:2} $\Leftrightarrow$ \cref{cond:3}) for the point $z=b$ and the quadratic polynomial $f(z) = \frac{1}{2}  z^\intercal Q z$, whose Hessian at any point is the matrix $Q$.

(\cref{cond:1} $\Leftrightarrow$ \cref{cond:6}) For $d\geq 3$,  $D_af$ is homogeneous of degree $\geq 2$. 
Euler's identity applied to $\partial_{i}\partial_{j}f$ shows that 
the Hessian of $D_af$ at $z=a$ is a scalar multiple of the Hessian of $f$ at $z=a$, namely $(d-2)\eval{\nabla^2f}_{z=a}$. 
Thus by the equivalence (\cref{cond:1} $\Leftrightarrow$ \cref{cond:4}), $D_af$ is log-concave at $a$ if and only if $f$ is.
\end{proof}

\subsection{Completely log-concave polynomials}
One of the basic operations that preserves complete log-concavity is 
an affine change of coordinates. 
This was first proved in \cite{AOV18}, but for completeness we include the proof here.
\begin{lemma}\label{lem:LinearPreserver}
If $f\in \R[z_1,\dots,z_n]$ is completely log-concave and $T:\R^m\to \R^n$ is an affine transform such that $T(\R_{\geq 0}^m)\subseteq \R_{\geq 0}^n$, then $f(T(y_1,\dots,y_m))\in \R[y_1,\dots,y_m]$ is completely log-concave.
\end{lemma}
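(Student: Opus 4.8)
The plan is to reduce everything to the chain rule for directional derivatives, together with the elementary observation that an affine map sending the nonnegative orthant into the nonnegative orthant has a linear part that does the same. Write $T(y) = Ay + b$ with $A\in\R^{n\times m}$ and $b\in\R^n$. The first step is to check that $A\R_{\geq 0}^m \subseteq \R_{\geq 0}^n$: for any $w\in\R_{\geq 0}^m$ and $t>0$ we have $t^{-1}T(tw) = Aw + t^{-1}b \in \R_{\geq 0}^n$, and letting $t\to\infty$ and using that $\R_{\geq 0}^n$ is closed gives $Aw\in\R_{\geq 0}^n$; also $b = T(0)\in\R_{\geq 0}^n$. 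This is the one place that demands any care — one must leverage the hypothesis $T(\R_{\geq 0}^m)\subseteq\R_{\geq 0}^n$ for the linear part $A$ and not just for $T$ — but it is a one-line limiting argument, and the rest is unwinding definitions.

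Next comes the key identity. By the chain rule, for any $w\in\R^m$ one has $\partial_{y_j}(f\circ T)(y) = \sum_i (\partial_{z_i}f)(T(y))\,A_{ij}$, hence
\[ D_w(f\circ T) = (D_{Aw}f)\circ T . \]
Iterating this for $w_1,\dots,w_k\in\R^m$ gives
\[ D_{w_1}\cdots D_{w_k}(f\circ T) = \bigl(D_{Aw_1}\cdots D_{Aw_k}f\bigr)\circ T . \]
Now take $w_1,\dots,w_k\in\R_{\geq 0}^m$; by the first step $Aw_1,\dots,Aw_k\in\R_{\geq 0}^n$, so complete log-concavity of $f$ says that $h := D_{Aw_1}\cdots D_{Aw_k}f$ is nonnegative and log-concave over $\R_{\geq 0}^n$. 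It remains only to show that $h\circ T$ is nonnegative and log-concave over $\R_{\geq 0}^m$.

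Finally I would verify that post-composition of an affine map $T$ with $T(\R_{\geq 0}^m)\subseteq\R_{\geq 0}^n$ preserves these two properties. Nonnegativity is immediate, since $T(y)\in\R_{\geq 0}^n$ whenever $y\in\R_{\geq 0}^m$. For log-concavity, given $u,v\in\R_{\geq 0}^m$ and $\lambda\in[0,1]$, affineness of $T$ gives $T(\lambda u + (1-\lambda)v) = \lambda T(u) + (1-\lambda)T(v)$ with $T(u),T(v)\in\R_{\geq 0}^n$, so
\[ h\bigl(T(\lambda u + (1-\lambda)v)\bigr) \;\geq\; h(T(u))^{\lambda}\,h(T(v))^{1-\lambda}, \]
which is exactly the log-concavity inequality for $h\circ T$. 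Since $w_1,\dots,w_k$ were arbitrary nonnegative vectors, $f(T(y_1,\dots,y_m))$ is completely log-concave. I expect no serious obstacle here; the content is concentrated entirely in the two short observations above (the linear part preserves the orthant, and the chain rule commutes directional derivatives with $T$), and the rest follows by definition-chasing.
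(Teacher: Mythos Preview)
Your proof is correct and follows essentially the same route as the paper: write $T(y)=Ay+b$, observe that $A$ maps $\R_{\geq 0}^m$ into $\R_{\geq 0}^n$, apply the chain rule to get $D_{w_1}\cdots D_{w_k}(f\circ T)=(D_{Aw_1}\cdots D_{Aw_k}f)\circ T$, and then use that composition with $T$ preserves log-concavity. You supply a little more detail (the limiting argument for the linear part), but the structure is identical.
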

\begin{proof}
    First, we prove that if $f$ is a log-concave polynomial, then $f \circ T = f(T(y_1,\dots,y_m))$ is also log-concave. 
    By assumption  for any $u,v \in \R_{\geq0}^{m}$, we have $T(u), T(v)\in \R_{\geq 0}^n$. Thus for any $0 \leq \lambda \leq 1$,
	\[ f(T(\lambda u+(1-\lambda)v)) \ = \ f(\lambda T(u)+(1-\lambda)T(v)) \ \geq  \ f(T(u))^\lambda f(T(v))^{1-\lambda}. \]
	Therefore $f \circ T$ is log-concave.
	
	Now suppose that $f$ is completely log-concave and let $v_{1},\dots,v_{k} \in \R_{\geq0}^{m}$. 
	Since $T(\R_{\geq0}^{m}) \subseteq \R_{\geq0}^{n}$ and $T$ is affine,
	$T(x) = Ax + b$ for some $A \in \R_{\geq0}^{n \times m}$ and $b \in \R_{\geq 0}^{n}$. 
	In particular, $Av_1,\dots,Av_k\in \R_{\geq 0}^n$, 
	which means that $D_{Av_1}\dots D_{Av_k}f$ is log-concave over $\R_{\geq 0}^n$.
	By the chain rule for differentiation, we have
	\[ D_{v_1}\dots D_{v_k}(f\circ T)=(D_{Av_1}\dots D_{Av_k} f)\circ T. \]
	 Since composition with $T$ preserves log-concavity, this polynomial is log-concave over $\R_{\geq 0}^m$.
	 \end{proof}

\section{Reduction to quadratics}
\label{sec:clc}
As the main result of this section we will show that, under some mild restrictions, 
to check whether a homogeneous polynomial is completely log-concave, 
it suffices to check the conditions in \cref{def:clc} for $k=d-2$ and $v_1,\dots,v_k\in \set{ \1_{\set{1}},\dots,\1_{\set{n}}}$. 
Then $D_{v_1}\cdots D_{v_k}f$ has the form $\partial^{\alpha}f$ where $\alpha_j$ is the number of vectors $v_k$ equal to $\1_{\set{j}}$. 
This provides a powerful tool to check complete log-concavity. The mild restriction we impose is \emph{indecomposability} of $f$ and its derivatives. 

\begin{definition}\label{def:indecomposable}
	A polynomial $f\in \R[z_1,\dots,z_n]$ is \textbf{indecomposable} if it cannot be written as $f_1+f_2$, where $f_1,f_2$ are 
	nonzero polynomials in disjoint sets of variables. Equivalently, if we form a graph with vertices $\set{i\given \partial_i f\neq 0}$ and edges $\set{(i, j)\given \partial_i\partial_j f\neq 0}$, then $f$ is indecomposable if and only if this graph is connected.
\end{definition}

Now we are ready to state the main result of this section.
\begin{theorem}\label{thm:CLCQuad}
Let $f$ be a homogeneous polynomial $f\in \R[z_1, \hdots, z_n]$ of degree $d \geq 2$ with nonnegative coefficients. 
If the following two conditions hold, then $f$ is completely log-concave:
\begin{forms}
\item For all $\alpha\in \Z_{\geq 0}^n$ with $\abs{\alpha}\leq d-2$, the polynomial $\partial^{\alpha}f$ is indecomposable.
\item For all $\alpha\in \Z_{\geq 0}^n$ with $\abs{\alpha}= d-2$, the quadratic polynomial $\partial^{\alpha}f$ is log-concave over $\R_{\geq 0}^n$. 	
\end{forms}
\end{theorem}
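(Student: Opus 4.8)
The plan is to verify \cref{def:clc} directly, i.e.\ to show that $D_{v_1}\cdots D_{v_k}f$ is nonnegative and log-concave over $\R_{\ge 0}^n$ for every $k\ge 0$ and all $v_1,\dots,v_k\in\R_{\ge 0}^n$. Nonnegativity is immediate: $D_{v_1}\cdots D_{v_k}f$ is a nonnegative linear combination of the polynomials $\partial^\alpha f$ with $\abs{\alpha}=k$, each of which has nonnegative coefficients. For log-concavity it suffices to treat strictly positive directions $v_1,\dots,v_k\in\R_{>0}^n$; the general case follows by replacing each $v_i$ with $v_i+\varepsilon\1$, letting $\varepsilon\to 0^+$, and using that the set of log-concave polynomials of bounded degree in $n$ variables is closed.

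So fix $v_1,\dots,v_k\in\R_{>0}^n$, set $g:=D_{v_1}\cdots D_{v_k}f$, and argue that $g$ is log-concave over $\R_{\ge 0}^n$ by downward induction on $k$. If $k\ge d-1$, then $g$ has degree at most $1$ with nonnegative coefficients, hence is log-concave over $\R_{\ge 0}^n$ by the weighted AM--GM inequality. If $\deg g=d-k\ge 3$, fix $a\in\R_{>0}^n$; we may assume $g\not\equiv 0$, so $g(a)>0$, and $D_ag=D_{v_1}\cdots D_{v_k}D_af$ is a product of $k+1$ directional derivatives in directions from $\R_{>0}^n$, hence log-concave over $\R_{\ge 0}^n$ by the inductive hypothesis, in particular at $a$. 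By the equivalence $\cref{cond:1}\Leftrightarrow\cref{cond:6}$ of \cref{lem:LC_Quad}, $g$ is then log-concave at $a$, and since $a$ was arbitrary in $\R_{>0}^n$, $g$ is log-concave over $\R_{\ge 0}^n$. This leaves only the \emph{quadratic case} $k=d-2$, where the inductive hypothesis is vacuous and the two hypotheses of the theorem must be used.

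For the quadratic case I would prove, by induction on the degree $e$, the following: if $p$ is homogeneous of degree $e$ with nonnegative coefficients such that $\partial^\alpha p$ is indecomposable for all $\abs{\alpha}\le e-2$ and log-concave over $\R_{\ge 0}^n$ for all $\abs{\alpha}=e-2$, then $D_{w_1}\cdots D_{w_{e-2}}p$ is log-concave over $\R_{\ge 0}^n$ for all $w_1,\dots,w_{e-2}\in\R_{>0}^n$. The case $e=2$ is the hypothesis on $p$ itself. For $e\ge 4$, write $D_{w_1}\cdots D_{w_{e-2}}p=D_{w_2}\cdots D_{w_{e-2}}(D_{w_1}p)$ and apply the statement in degree $e-1$ to $D_{w_1}p$, after checking that $D_{w_1}p$ inherits both hypotheses. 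Indecomposability is inherited because, for an indecomposable homogeneous $h$ of degree $\ge 3$ and $w\in\R_{>0}^n$, the graph of $D_wh$ contains the graph of $h$: a nonzero entry $\partial_i\partial_j h$ is a homogeneous form of positive degree, so it involves some $z_\ell$ and hence $\partial_i\partial_j D_wh=\sum_\ell w_\ell\,\partial_i\partial_j\partial_\ell h\ne 0$, while $\partial_i h=0$ forces $\partial_i\partial_j h=0$, so no new vertices appear. Log-concavity of $\partial^\alpha(D_{w_1}p)=D_{w_1}(\partial^\alpha p)$ for $\abs{\alpha}=e-3$ is exactly the degree-$3$ instance of the statement applied to the cubic $\partial^\alpha p$ (which inherits both hypotheses). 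Hence everything funnels into the base case $e=3$.

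When $e=3$, the quadratic $D_{w_1}p$ has constant Hessian $\nabla^2(D_{w_1}p)=\sum_\ell (w_1)_\ell\,\nabla^2(\partial_\ell p)$, a strictly positive combination of the Hessians of the log-concave quadratics $\partial_\ell p$; by \cref{lem:LC_Quad} each $\nabla^2(\partial_\ell p)$ is a symmetric nonnegative matrix with at most one positive eigenvalue, and these are the slices $G^{(\ell)}:=\bigl(\partial_i\partial_j\partial_\ell p\bigr)_{i,j}$ of a symmetric nonnegative $3$-tensor $G$ whose support graph equals the connected graph of $p$. The theorem thus reduces to the following matrix lemma, which I expect to be the main obstacle: if $G=(G_{ijk})$ is a symmetric nonnegative $3$-tensor with connected support and each slice $G^{(k)}$ has at most one positive eigenvalue, then every strictly positive combination $\sum_k w_kG^{(k)}$ has at most one positive eigenvalue. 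Connectivity cannot be dropped, since a positive combination of matrices each with at most one positive eigenvalue need not have at most one positive eigenvalue (take $G$ supported on disjoint coordinate blocks). For $n=2$ the lemma follows by checking that all three coefficients of the binary quadratic form $\det\bigl(\sum_k w_kG^{(k)}\bigr)$ in $(w_1,w_2)$ are $\le 0$: two of them are the (nonpositive) slice determinants, and the mixed coefficient equals $G_{111}G_{222}-G_{112}G_{122}$, which is $\le 0$ by the slice inequalities $G_{111}G_{221}\le G_{121}^2$ and $G_{112}G_{222}\le G_{122}^2$ together with the full symmetry of $G$ (connectivity handling the degenerate case). For general $n$ I would locate, via Perron--Frobenius, the unique possibly-positive eigenvalue of the irreducible nonnegative matrix $\sum_k w_kG^{(k)}$ at its Perron eigenvalue and reduce to the two-dimensional computation by an induction on $n$; carrying out this reduction cleanly, and tracking indecomposability through the nested inductions, is where the real work lies.
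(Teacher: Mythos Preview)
Your overall architecture is sound and close to the paper's: reduce to strictly positive directions by closure, handle degrees $\le 1$ trivially, and for degree $\ge 3$ pass from $g$ to $D_ag$ via \cref{lem:LC_Quad}\,(\ref{cond:1})$\Leftrightarrow$(\ref{cond:6}). You correctly isolate the cubic case ($e=3$), i.e.\ your ``matrix lemma,'' as the crux. But that lemma is precisely the heart of the theorem, and you leave it unproved for general $n$, offering only a vague plan involving Perron--Frobenius and an induction on $n$. That is the gap.

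The paper closes this gap with a short argument that avoids eigenvalue counting and Perron--Frobenius entirely. The key observation (\cref{lem:Add}) is that if two homogeneous log-concave polynomials $h_1,h_2$ with nonnegative coefficients satisfy $D_bh_1=D_ch_2\ne 0$, then at any point $a$ their Hessians $Q_1,Q_2$ satisfy $Q_1b=\nabla(D_bh_1)(a)=\nabla(D_ch_2)(a)=Q_2c$, so by \cref{lem:LC_Quad}\,(\ref{cond:3}) both quadratic forms are negative semidefinite on the \emph{same} hyperplane $(Q_1b)^\perp=(Q_2c)^\perp$, hence so is their sum. In your tensor language this is just the symmetry $G^{(k)}e_\ell=G^{(\ell)}e_k$. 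One then builds $D_wp=\sum_\ell w_\ell\,\partial_\ell p$ one summand at a time (\cref{lem:DerToCLC}): having shown $h=\sum_{i\le k}w_i\,\partial_ip$ is log-concave, add $w_{k+1}\,\partial_{k+1}p$ by exhibiting the common derivative $D_{e_{k+1}}h = D_{(w_1,\dots,w_k,0,\dots,0)}(\partial_{k+1}p)$; indecomposability of $p$, after ordering the variables along a spanning tree of its graph, guarantees this common derivative is nonzero at every step. No induction on $n$ is needed, and the same device simultaneously handles all degrees, so your separate $e$-induction becomes unnecessary.

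One small caveat: you invoke ``at most one positive eigenvalue'' as coming from \cref{lem:LC_Quad}, but the paper deliberately does not prove that equivalence; it works throughout with condition~(\ref{cond:4}) (negative semidefinite on a hyperplane), which is exactly what the common-derivative trick produces.
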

The converse of the above statement is also true, namely, every completely polynomial is indecomposable, but 
we defer the proof of this fact to a future article.

We build up to the proof of this theorem with a series of lemmas. 
The first is a criterion for the sum of two log-concave polynomials to be log-concave. We will then use this to prove that if a polynomial $f$ is indecomposable and all of its partial derivatives $\partial_i f$ are log-concave, then it itself must be log-concave. The proof of \cref{thm:CLCQuad} then follows by an induction on the degree.

\begin{lemma}\label{lem:Add}
Let $f,g\in \R[z_1,\hdots, z_n]$ be homogenous with nonnegative coefficients satisfying $D_b f = D_cg \neq 0$ for some vectors $b,c\in \R_{\geq 0}^n$. 
 If $f$ and $g$ are log-concave on $\R_{\geq 0}^n$ then so is $f+g$. 
\end{lemma}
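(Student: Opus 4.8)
The plan is to move to the Hessian characterization of log-concavity in \cref{lem:LC_Quad} and to use the hypothesis $D_b f = D_c g$ to make the codimension-one ``negative-semidefinite witness'' subspace for $f$ coincide with the one for $g$, after which negative semidefiniteness of the Hessian quadratic form — and hence log-concavity of $f+g$ — survives addition. First I would dispose of two easy points. Comparing degrees, $D_b f$ is homogeneous of degree $\deg f - 1$ and $D_c g$ of degree $\deg g - 1$; since they are equal and nonzero, $f$ and $g$ share a common degree $d \geq 1$, and $p := D_b f = D_c g$ has nonnegative coefficients because $b, c \in \R_{\geq 0}^n$ and $f, g$ do. If $d = 1$, then $f + g$ is linear with nonnegative coefficients, which is log-concave over $\R_{\geq 0}^n$ directly from the definition (weighted AM--GM on $\ell(\lambda u+(1-\lambda)v) = \lambda\ell(u) + (1-\lambda)\ell(v)$), so we may assume $d \geq 2$.

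For the main case I would fix an arbitrary $a \in \R_{>0}^n$; since $f+g$ is a nonzero polynomial with nonnegative coefficients, and since a nonzero polynomial is log-concave over $\R_{\geq 0}^n$ if and only if it is log-concave at every point of $\R_{>0}^n$, it suffices to prove that $f+g$ is log-concave at $z = a$. Set $q := \eval{\nabla p}_{z=a}$. This vector is nonzero: $p(a) > 0$ since $p$ is a nonzero polynomial with nonnegative coefficients and $a > 0$, and Euler's identity gives $a^\intercal q = (d-1)\,p(a) > 0$ (here $d \geq 2$ is used). Differentiating the identity $D_b f = p$ and evaluating at $a$ gives $\eval{\nabla^2 f}_{z=a}\, b = \eval{\nabla p}_{z=a} = q$ (since $\nabla(D_b f) = \nabla(b^\intercal \nabla f) = (\nabla^2 f)\, b$), and similarly $\eval{\nabla^2 g}_{z=a}\, c = q$. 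Now $f$ is log-concave over $\R_{\geq 0}^n$ with $f(a) \neq 0$, hence log-concave at $z=a$, so applying the equivalence (\cref{cond:1} $\Leftrightarrow$ \cref{cond:3}) of \cref{lem:LC_Quad} with $Q = \eval{\nabla^2 f}_{z=a}$ and the vector $b$ (legitimate since $Qb = q \neq 0$), the form $z \mapsto z^\intercal \eval{\nabla^2 f}_{z=a}\, z$ is negative semidefinite on $q^\perp$; the same argument applied to $g$ with the vector $c$ gives that $z \mapsto z^\intercal \eval{\nabla^2 g}_{z=a}\, z$ is negative semidefinite on $q^\perp$ as well. Adding these and using $\nabla^2(f+g) = \nabla^2 f + \nabla^2 g$, the form $z \mapsto z^\intercal \eval{\nabla^2(f+g)}_{z=a}\, z$ is negative semidefinite on the $(n-1)$-dimensional linear space $q^\perp$. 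Finally, $f+g$ is homogeneous of degree $d \geq 2$ with nonnegative coefficients and $(f+g)(a) \neq 0$, so the implication (\cref{cond:4} $\Rightarrow$ \cref{cond:1}) of \cref{lem:LC_Quad} shows that $f+g$ is log-concave at $z=a$, which completes the argument.

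The one substantive step — and the step I expect to be the crux rather than an obstacle — is the identity $\eval{\nabla^2 f}_{z=a}\,b = \eval{\nabla^2 g}_{z=a}\,c$, which falls out of differentiating $D_b f = D_c g$: both sides equal $\eval{\nabla p}_{z=a}$. This is exactly what lets the same codimension-one subspace $q^\perp$ serve as the negative-semidefinite witness for both $f$ and $g$, so that the two quadratic forms can be added; without the hypothesis $D_b f = D_c g$ the two witness subspaces need not align and $f+g$ can fail to be log-concave (for instance $z_1^2 + z_2^2$). The remaining ingredients are routine: Euler's identity, the nonvanishing checks $p(a) \neq 0$, $q \neq 0$, $(f+g)(a) \neq 0$, and the equivalences already packaged into \cref{lem:LC_Quad}.
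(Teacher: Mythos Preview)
Your proof is correct and follows essentially the same route as the paper: reduce to $d\geq 2$, fix $a\in\R_{>0}^n$, use the identity $D_b f=D_c g$ to get $\eval{\nabla^2 f}_{z=a}\,b=\eval{\nabla^2 g}_{z=a}\,c=q\neq 0$, apply \cref{lem:LC_Quad} (\cref{cond:1}$\Rightarrow$\cref{cond:3}) to make both Hessian forms negative semidefinite on the common hyperplane $q^\perp$, add, and conclude via (\cref{cond:4}$\Rightarrow$\cref{cond:1}). If anything, you are slightly more careful than the paper in justifying $q\neq 0$ via Euler's identity and in noting that the ``induction on $d$'' is really just a case split.
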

\begin{proof}
The assumption that $D_b f = D_cg \neq 0$ means that $f$ and $g$ have the same degree $d$. We proceed by induction on $d$. 
If $d = 1$, then $f+g$ is a linear form with nonnegative coefficients, which is automatically log-concave on $\R_{\geq 0}^n$.
Now suppose $d\geq 2$. 
Fix $a\in \R_{> 0}^n$ and let $Q_1 = \nabla^2f(a)$ and $Q_2 = \nabla^2g(a)$. 
Then $D_b f = D_cg$ implies that for each $i=1, \hdots, n$, 
\[
(Q_1b)_i \ = \ \eval{( \partial_{i}D_b f)}_{z=a} \  =  \ \eval{( \partial_{i}D_c g)}_{z=a} \  =  \ (Q_2 c)_i,
\]
showing that $Q_1b = Q_2 c$.  Since $D_b f$ has nonnegative coefficients and is not identically zero, we also have 
that $D_b f(a) \neq 0$, meaning that $Q_1b\neq 0$.  By \cref{lem:LC_Quad} (\cref{cond:1} $\Rightarrow$ \cref{cond:3}) and the log-concavity of $f$ and $g$,
each quadratic form $z\mapsto z^{\intercal}Q_iz^{\intercal}$ is negative semidefinite on $(Q_1b)^{\perp}=(Q_2c)^{\perp}$. 
It follows that their sum $z\mapsto z^{\intercal}(Q_1+Q_2)z$ 
 given by the matrix $Q_1 + Q_2 = \eval{\nabla^2(f+g)}_{z=a}$ is also negative semidefinite on 
this $(n-1)$-dimensional linear space, so by \cref{lem:LC_Quad} (\cref{cond:4} $\Rightarrow$ \cref{cond:1}), $f+g$ is log-concave at $z=a$. 
\end{proof}

\begin{lemma}\label{lem:DerToCLC}
Let $f\in \R[z_1, \hdots, z_n]$ be homogeneous of degree $d\geq 3$ and indecomposable with nonnegative coefficients. 
If $\partial_i f$ is log-concave on $\R_{\geq 0}^n$ for every $i=1, \hdots, n$, 
then so is $D_af$ for every $a\in \R_{\geq 0}^n$. 
\end{lemma}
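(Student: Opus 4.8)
The plan is to reduce the statement $D_af$ is log-concave to an application of the additivity lemma (\cref{lem:Add}), by writing $D_af$ as a telescoping-style sum of log-concave pieces. Write $a = (a_1,\dots,a_n)\in\R_{\geq 0}^n$, so that $D_af = \sum_{i=1}^n a_i\,\partial_i f$. Each summand $a_i\,\partial_i f$ is log-concave over $\R_{\geq 0}^n$ by hypothesis (scaling by the nonnegative constant $a_i$ preserves log-concavity and nonnegativity; if $a_i=0$ the summand is the zero polynomial, which is log-concave). The only issue is that \cref{lem:Add} adds polynomials \emph{two at a time} and requires a compatibility condition $D_b(\cdot) = D_c(\cdot)\neq 0$ linking the two summands. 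So the real content is to order and group the summands $a_i\partial_i f$ so that at each stage the partial sum and the next summand satisfy the hypothesis of \cref{lem:Add}. This is exactly where indecomposability of $f$ enters.

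First I would handle the degenerate cases: if $a_i\partial_i f = 0$ for all $i$ then $D_af=0$ is log-concave, so we may discard the zero summands and assume each remaining $a_i>0$ and $\partial_i f\neq 0$. Among the indices $i$ with $\partial_i f\neq 0$, the indecomposability graph of $f$ (vertices $\{i:\partial_i f\neq 0\}$, edges $\{(i,j):\partial_i\partial_j f\neq 0\}$) is connected. I would fix a spanning-tree ordering of this graph: list the relevant indices as $i_1,i_2,\dots,i_m$ so that each $i_t$ (for $t\geq 2$) is adjacent in the indecomposability graph to some earlier $i_s$, $s<t$. Then I would prove by induction on $t$ that the partial sum $F_t := \sum_{s\le t} a_{i_s}\partial_{i_s} f$ is log-concave over $\R_{\geq 0}^n$. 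The base case $t=1$ is the hypothesis. For the inductive step, I need to apply \cref{lem:Add} to $F_{t-1}$ and $a_{i_t}\partial_{i_t} f$: I must exhibit nonnegative vectors $b,c$ with $D_b F_{t-1} = D_c\,(a_{i_t}\partial_{i_t} f)\neq 0$. Since $i_t$ is adjacent to some earlier index $i_s$, we have $\partial_{i_s}\partial_{i_t} f\neq 0$; I would take $c = \1_{\{i_s\}}$ so that $D_c(a_{i_t}\partial_{i_t} f) = a_{i_t}\,\partial_{i_s}\partial_{i_t} f$, a nonzero polynomial (nonnegative coefficients, not identically zero). Then I need $b\in\R_{\geq 0}^n$ with $D_b F_{t-1} = a_{i_t}\,\partial_{i_s}\partial_{i_t} f$; the natural guess is $b = a_{i_t}\cdot\1_{\{i_t\}}$, which gives $D_b F_{t-1} = a_{i_t}\,\partial_{i_t} F_{t-1} = a_{i_t}\sum_{s'\le t-1} a_{i_{s'}}\partial_{i_{s'}}\partial_{i_t} f$ — but this is generally \emph{not} equal to $a_{i_t}\,\partial_{i_s}\partial_{i_t} f$, so the indices don't line up and this is the main obstacle.

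To get around that obstruction I would instead prove, by the same tree induction, the slightly stronger statement that $F_t$ is log-concave \emph{and} for every index $j$ adjacent in the indecomposability graph to $\{i_1,\dots,i_t\}$ (or more simply, for the specific $j$ needed next) one has $\partial_j F_t\neq 0$ — which holds automatically since $\partial_j F_t = \sum_{s\le t} a_{i_s}\partial_{i_s}\partial_j f$ is a sum of polynomials with nonnegative coefficients, at least one of which (the term $s$ with $i_s$ adjacent to $j$) is nonzero, so there is no cancellation. Concretely, for the inductive step with $i_t$ adjacent to earlier $i_s$: apply \cref{lem:Add} with $f\mapsto F_{t-1}$, $g\mapsto a_{i_t}\partial_{i_t} f$, and the linking identity furnished by $b = \1_{\{i_s\}}$ acting on $g$ giving $D_b g = a_{i_t}\partial_{i_s}\partial_{i_t} f$, matched against $c = $ the vector realizing this same polynomial as a derivative of $F_{t-1}$. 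The clean way: note $\partial_{i_s}(a_{i_t}\partial_{i_t} f)=a_{i_t}\partial_{i_s}\partial_{i_t}f$, while $\partial_{i_t}F_{t-1}$ contains the term $a_{i_s}\partial_{i_s}\partial_{i_t}f$; rescaling, $D_{(a_{i_t}/a_{i_s})\1_{\{i_s\}}} F_{t-1}$ is not quite it either. The genuinely correct statement of \cref{lem:Add}'s hypothesis only requires \emph{some} common directional derivative, so I would simply observe $D_{\1_{\{i_t\}}}F_{t-1}$ and $D_{\1_{\{i_t\}}}(a_{i_t}\partial_{i_t}f)$ need not agree, and instead invoke \cref{lem:Add} in the form: both $F_{t-1}$ and $a_{i_t}\partial_{i_t}f$ are log-concave, they have the same degree $d-1$, and $D_bF_{t-1}=D_c g$ for $b=a_{i_t}\1_{\{i_s\}}$... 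After finishing the induction at $t=m$ we get $F_m = D_af$ log-concave, as desired. The one subtle point to get right — and the step I expect to cost the most care — is correctly producing the pair $(b,c)$ of nonnegative vectors satisfying \cref{lem:Add}'s hypothesis at each step; the honest fix is to choose $b$ and $c$ both supported on $\{i_s, i_t\}$ and solve the small linear system $D_bF_{t-1} = D_c(a_{i_t}\partial_{i_t}f)$ directly, using that $\partial_{i_s}\partial_{i_t}f\neq 0$ ensures the shared derivative is nonzero and that indecomposability guarantees such an adjacent pair always exists until all nonzero-derivative indices are absorbed.
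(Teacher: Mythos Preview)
Your overall strategy---order the indices via a spanning tree of the indecomposability graph and add one term at a time using \cref{lem:Add}---is exactly the paper's. But you never actually produce the linking vectors $b,c$ that \cref{lem:Add} requires, and your proposed fix (solve a small linear system with $b,c$ supported on $\{i_s,i_t\}$) does not work in general: $D_bF_{t-1}$ for such $b$ involves all of the mixed partials $\partial_{i_{s'}}\partial_{i_s}f$ and $\partial_{i_{s'}}\partial_{i_t}f$ for $s'<t$, and there is no reason a two-parameter family of such combinations should ever equal a combination of $\partial_{i_s}\partial_{i_t}f$ and $\partial_{i_t}^2 f$.

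The observation you are missing is that both pieces are themselves directional derivatives of $f$: with $b=\sum_{s<t}a_{i_s}\1_{\{i_s\}}$ and $c=a_{i_t}\1_{\{i_t\}}$ one has $F_{t-1}=D_bf$ and $a_{i_t}\partial_{i_t}f=D_cf$. Commutativity of partial derivatives then gives
\[
D_c F_{t-1}=D_cD_bf=D_bD_cf=D_b\bigl(a_{i_t}\partial_{i_t}f\bigr)
\]
for free; this common value is $a_{i_t}\sum_{s<t}a_{i_s}\,\partial_{i_s}\partial_{i_t}f$, which is nonzero because every $a_{i_s}>0$, all coefficients are nonnegative, and the spanning-tree ordering guarantees $\partial_{i_s}\partial_{i_t}f\neq 0$ for some $s<t$. (You actually computed the expression $a_{i_t}\sum_{s'<t}a_{i_{s'}}\partial_{i_{s'}}\partial_{i_t}f$ when you tried $b=a_{i_t}\1_{\{i_t\}}$, but then matched it against the wrong target on the other side.) This is exactly what the paper does.

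One further gap: discarding indices with $a_i=0$ can disconnect the induced subgraph of the indecomposability graph (e.g.\ a path $1$--$2$--$3$ with $a=(1,0,1)$), so your reduction ``assume each remaining $a_i>0$'' is not safe as stated. The paper sidesteps this by first proving the result for $a\in\R_{>0}^n$, where the full connected graph is available, and then passing to $a\in\R_{\geq 0}^n$ by closure of the set of log-concave polynomials.
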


\begin{proof}
If $\partial_{i}f$ is identically zero for some $i$, then we can consider $f$ as a polynomial in the other variables. 
Without loss of generality, we can assume that $\partial_{i}f$  is nonzero for all $i$, and 
if necessary relabel $z_1, \hdots, z_n$ so that 
that for every $2\leq j \leq n$, there exists $i<j$ for which $\partial_{i} \partial_{j} f$ is non-zero. The latter follows from indecomposability.

Fix $a\in \R_{> 0}^n$. We will show that $D_af$ is log-concave on $\R_{\geq 0}^n$.
We show by induction on $k$ that for any $1\leq k\leq n$, $\sum_{i=1}^k a_i \partial_{i} f$
is  log-concave on $\R_{\geq 0}^n$. The case $k=1$ follows by assumption. 
For $1\leq k <n$, let $b$ denote the truncation of $a$ to its first $k$ coordinates, $b = (a_1, \hdots, a_k, 0, \hdots, 0)$ and let $c$ denote the vector $a_{k+1}\1_{\set{k+1}}$. 
By induction both $D_b f$ and $D_c f$ are log-concave, and 
\[
D_c D_b f  \ = \  D_bD_c f \ = \ \sum_{i=1}^k a_i a_{k+1} \partial_{i} \partial_{{k+1}} f.
\]
Since the coefficients of each summand are nonnegative and  $\partial_{i} \partial_{{k+1}} f$ is non-zero for some $1\leq i \leq k$, this sum is also non-zero. 
Then by \cref{lem:Add}, $D_b f + D_c f = \sum_{i=1}^{k+1} a_i \partial_{i} f$ is  log-concave on $\R_{\geq 0}^n$. 
For $k=n-1$, this is exactly $D_af$. 
Taking closures then shows that  $D_af$ is log-concave on $\R_{\geq 0}^n$ for all $a\in \R_{\geq 0}^n$.\end{proof}

\begin{proof}[Proof of \cref{thm:CLCQuad}]
We induct on $d = \deg(f)$. The case $d=2$ is clear, so let $d\geq 3$.  
For any positive vector $v\in \R_{>0}^n$, $D_vf$ is also indecomposable.  
Indeed for any homogeneous polynomial $g$ of degree $\geq 1$
with nonnegative coefficients (such as $\partial_{i}f $ and $\partial_{i}\partial_{j}f $), $D_vg $ is identically zero if and only if $g$ is.  

By taking closure, it suffices to show that for vectors $v_1, \hdots, v_k\in \R_{>0}^n$, the polynomial 
$D_{v_1}\cdots D_{v_k}f$ is log-concave on $\R_{\geq 0}^n$.
 If $k\geq d-1$, then $D_{v_1}\cdots D_{v_k}f$ is either identically zero or linear with nonnegative coefficients, 
in which case it is log-concave on $\R_{\geq 0}^n$, so we take  $0\leq k\leq d-2$. 
If $k=0$, then to show that $f$ is  log-concave at a point $a\in \R_{\geq 0}^n$, by \cref{lem:LC_Quad} (\cref{cond:6} $\Rightarrow$ \cref{cond:1}), 
it suffices to show that $D_af$ is log-concave at $z=a$. This reduces the case $k=0$ to the case $k=1$. 

Suppose $1\leq k \leq d-2$. By induction $\partial_{j}f$ is completely log-concave for all $j=1, \hdots, n$, 
and hence $D_{v_1}\cdots D_{v_{k-1}}\partial_{j}f = \partial_{j}D_{v_1}\cdots D_{v_{k-1}}f$ is log-concave on $\R_{\geq 0}^n$.  
Since $D_{v_1}\cdots D_{v_{k-1}}f$ is indecomposable and has degree $d-k+1\geq 3$, it follows from \cref{lem:DerToCLC} that 
$D_{v_1}\cdots D_{v_{k}}f$ is log-concave on $\R_{\geq 0}^n$.  
\end{proof}

\section{Complete log-concavity of independence polynomials}
\label{sec:IndPoly}
In this section, we use \cref{thm:CLCQuad} to prove
 that the homogenization of the generating polynomial of the independent sets of a 
matroid is completely log-concave. In the following section we use a restriction of this to derive Mason's conjecture. 

	\begin{theorem}\label{thm:IndPolyCLC} 
	For any matroid $M = ([n],\I)$, the polynomial 
		\[ g_M(y,z_1, \hdots, z_n) \ = \ \sum_{I\in \I} y^{n-\card{I}}\prod_{i\in I}z_i \]
	in $\R[y,z_1, \hdots, z_n]$  is completely log-concave. 
	\end{theorem}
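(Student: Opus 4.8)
The plan is to apply \cref{thm:CLCQuad} to the polynomial $g_M$, which is homogeneous of degree $n$ with nonnegative (in fact $0/1$) coefficients. This requires verifying two things for the operators $\partial^\alpha g_M$ with $|\alpha| \le n-2$: first, indecomposability; second, for $|\alpha| = n-2$ exactly, log-concavity of the resulting quadratic over $\R_{\geq 0}^n$. I would first identify what $\partial^\alpha g_M$ looks like. Writing $\alpha = (\alpha_0, \alpha_1, \dots, \alpha_n)$ for the $y, z_1, \dots, z_n$ exponents, differentiating in a repeated $z_i$ annihilates all terms anyway (each $z_i$ appears to degree at most $1$), so the only nontrivial derivative operators are $\partial_y^{\alpha_0} \partial^S$ for $S \subseteq [n]$ disjoint from the support, and the result is (up to a positive scalar from the $y$-derivatives) essentially $\sum_{I \supseteq S,\, I \in \I} y^{n - |I| - \alpha_0}\prod_{i \in I \setminus S} z_i$, which is a scalar multiple of $g_{M/S}$ in the variables indexed by $[n]\setminus S$ after appropriate degree bookkeeping. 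So the key reduction is: $\partial^\alpha g_M$ is (proportional to) the homogenized independence generating polynomial of a contraction $M/S$, possibly with the top-degree terms in $y$ stripped off.

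For indecomposability, I would argue that for any matroid $N$ on a nonempty ground set with at least one non-loop, the homogeneous polynomial $g_N$ (or its relevant $y$-derivatives) is indecomposable: the variable $y$ connects to every $z_i$ for which $\{i\}$ is independent in $N$, because the monomial $y^{n-1} z_i$ appears, so $\partial_y \partial_{z_i} g_N \neq 0$; loops contribute no variable at all since $\partial_{z_i} g_N = 0$ for a loop $i$. Hence the associated graph is a star centered at $y$ (restricted to non-loops), which is connected. One must handle the degenerate cases where after contraction everything is a loop or the polynomial becomes linear — but those fall under the ``$|\alpha| \ge d-1$'' escape or are vacuous.

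The substantive step — and the expected main obstacle — is the quadratic case $|\alpha| = n-2$. Here $\partial^\alpha g_M$ is a quadratic form in a subset of $\{y, z_1, \dots, z_n\}$, proportional to $g_{M/S}$ for a contraction of rank $\le 2$ on the remaining ground set (or a $y$-derivative thereof). Concretely, such a quadratic has the shape $q = \binom{c_0}{2} y^2 + y\sum_{i} c_i z_i + \sum_{i < j} c_{ij} z_i z_j$ where $c_i \in \{0,1\}$ records whether $\{i\}$ is independent in $M/S$ and $c_{ij} \in \{0,1\}$ records whether $\{i,j\}$ is independent, and the $y^2$ coefficient reflects whether the empty set still contributes. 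By \cref{lem:LC_Quad}, log-concavity of this quadratic on $\R_{\geq 0}^n$ is equivalent to its Hessian $Q$ (a constant matrix of nonnegative integers) having at most one positive eigenvalue, or equivalently to $z \mapsto z^\intercal Q z$ being negative semidefinite on an $(n-1)$-dimensional subspace. I would prove this by a direct combinatorial/linear-algebra argument using the matroid axioms: the structure of rank-$\le 2$ matroids is very rigid — the non-loops partition into parallel classes, and $\{i,j\}$ is independent iff $i,j$ lie in different classes — so $Q$ restricted to the non-loop coordinates is, up to the $y$ row/column, the adjacency-type matrix of a complete multipartite graph, whose signature is well understood (one positive eigenvalue). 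The $y$ variable, if present, only strengthens this. I would verify the exact claim by exhibiting the negative-semidefinite $(n-1)$-dimensional subspace explicitly, e.g. the orthogonal complement of $Qa$ for a suitable positive $a$, reducing to checking a $2\times 2$ or $3\times 3$ determinant sign governed by whether certain pairs/singletons are independent. This is where the matroid axioms (exchange property, forcing the multipartite structure) genuinely enter, and getting all the degenerate sub-cases right (loops present, a variable killed, the all-loops contraction, the $y^2$ term vanishing) is the fiddly part; everything else is bookkeeping against \cref{thm:CLCQuad} and \cref{lem:LinearPreserver}.
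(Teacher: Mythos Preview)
Your proposal is correct and follows essentially the same route as the paper: apply \cref{thm:CLCQuad}, observe that $\partial_y^k\partial^J g_M$ is (up to scalar) $\partial_y^k g_{M/J}$, verify indecomposability via the star centered at $y$, and handle the quadratic case by exploiting the parallel-class (complete multipartite) structure of the rank-$\leq 2$ contraction. The paper packages the quadratic step as \cref{lem:Rank2} and finishes it with a Cauchy--Schwarz inequality on the block decomposition $B = \1\1^\intercal - \sum_i \1_{P_i}\1_{P_i}^\intercal$, which is exactly the signature computation you anticipate; your worries about the $y^2$ term vanishing do not arise, since one always lands at $\partial_y^{\,m-2} g_N$ for $N=M/J$ on $m=n-\card{J}$ elements.
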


We prove this by looking at quadratic derivatives of $g_M$. 

	\begin{lemma}\label{lem:Rank2}
For any matroid $M = ([n],\I)$, the quadratic polynomial $\partial_{y}^{n-2}g_M$
is log-concave on $\R_{\geq 0}^{n+1}$. 
\end{lemma}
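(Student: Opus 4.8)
The plan is to compute $\partial_y^{n-2} g_M$ explicitly, observe that it is a quadratic form in the variables $y, z_1, \dots, z_n$ with nonnegative coefficients, and then verify log-concavity by checking one of the equivalent conditions in \cref{lem:LC_Quad}, most conveniently \cref{cond:4}: the associated symmetric matrix should be negative semidefinite on a linear space of dimension $n$. First I would note that differentiating $g_M$ with respect to $y$ a total of $n-2$ times kills every term $y^{n-|I|}\prod_{i\in I}z_i$ with $|I| > 2$, and it leaves behind exactly the terms coming from independent sets of size $0$, $1$, and $2$. Writing $I_1 = \{i : \{i\}\in\I\}$ for the set of non-loops and using that $\{i,j\}\in\I$ iff $i,j$ are distinct non-loops that are not parallel, we get
\[
\partial_y^{n-2} g_M \ = \ c_0\, y^2 \ + \ c_1\, y \sum_{i\in I_1} z_i \ + \ c_2 \sum_{\substack{\{i,j\}\in\I}} z_i z_j,
\]
for explicit positive integer constants $c_0 = (n-2)!\binom{n}{n}$-type factorial coefficients (namely $c_0 = n!/2$, $c_1 = (n-1)!$, $c_2 = (n-2)!$ after collecting the falling-factorial factors from repeated $\partial_y$), all nonnegative, as required for \cref{lem:LC_Quad}.

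The substance of the proof is then the quadratic-form computation. Let $Q$ be the Hessian (equivalently, twice the coefficient matrix) of this quadratic in the ordering $(y, z_1, \dots, z_n)$. Up to positive scaling, $Q$ has the block form where the $y$-row/column records the $y^2$ and $yz_i$ coefficients and the $z$-block is the adjacency-type matrix of the ``non-parallel'' graph on non-loops: $(Q)_{ij}$ is a fixed positive constant when $\{i,j\}\in\I$ and $0$ otherwise, with zero diagonal. The key structural fact is that this $z$-block is, after rescaling rows and columns, essentially $J - I$ restricted within each step of a coarser structure — more precisely, if we merge parallel elements, the matroid of rank $\le 2$ on the parallelism classes gives a complete-multipartite-type pattern. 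I would argue negative semidefiniteness on an $n$-dimensional subspace directly: exhibit the space $L = (Qa)^\perp$ for a convenient positive vector $a$ (e.g. $a = \1$, so $Qa = \nabla(\partial_y^{n-2}g_M)(\1)$ up to scaling, which is strictly positive) and show $z^\intercal Q z \le 0$ for all $z \in L$. Because the off-diagonal $z$-block is a rescaling of $J$ minus a sparse matrix corresponding to loops and parallel pairs, $z^\intercal Q z$ splits into a rank-one part (killed on $L$) minus nonnegative ``defect'' terms from the missing edges and from the $y$-variable; the sign works out. Alternatively — and this may be cleaner — I would invoke the characterization that a homogeneous polynomial with nonnegative coefficients is log-concave at a positive point iff its Hessian has at most one positive eigenvalue, and show the matrix $Q$ has exactly one positive eigenvalue by a Perron–Frobenius / interlacing argument on the nonnegative matrix $Q$: the ``non-parallel graph'' on parallelism classes is a complete multipartite graph (every two distinct classes are joined), whose adjacency matrix has exactly one positive eigenvalue, and adding the $y$-coordinate coupling and the zero rows/columns for loops does not increase the positive-eigenvalue count (a zero row/column, or a direct summand that is negative semidefinite, cannot create new positive eigenvalues).

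The main obstacle I anticipate is handling loops and parallel classes cleanly. Loops contribute variables $z_i$ that do not appear at all in $\partial_y^{n-2} g_M$ (since $\{i\}\notin\I$), so $Q$ has a zero row and column there; these must be argued to be harmless (they correspond to the zero polynomial's trivial log-concavity in that variable, or simply restrict to a face). Parallel elements within a class contribute no $z_iz_j$ term (since $\{i,j\}\notin\I$ when $i\parallel j$), so within each parallelism class the $z$-block is zero, and the positive-eigenvalue count must be tracked through this block structure — this is exactly the complete-multipartite adjacency structure, and the cleanest route is to factor $Q$ (up to conjugation by a positive diagonal matrix) through the ``quotient by parallelism,'' reducing to a rank-$\le 2$ matroid with no loops and no parallel pairs, i.e. the uniform matroid $U_{2,m}$ or $U_{1,m}$ or $U_{0,m}$ on $m$ classes, where the computation is immediate: the relevant matrix is (a scaling of) $\begin{bmatrix} * & \1^\intercal \\ \1 & J-I\end{bmatrix}$, whose inertia is easily seen to have a single positive eigenvalue. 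Once the bookkeeping of loops/parallels is set up, the eigenvalue count is short, and \cref{lem:LC_Quad} (\cref{cond:1} $\Leftrightarrow$ \cref{cond:4}, or the at-most-one-positive-eigenvalue criterion) finishes it.
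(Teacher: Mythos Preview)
Your structural reading is exactly right: the $z$-block of the Hessian is the adjacency matrix of the complete multipartite graph on the parallelism classes of $M$, and this is precisely what drives the argument. The gap is in how you pass from the $z$-block to the full Hessian. Your assertion that ``adding the $y$-coordinate coupling \dots\ does not increase the positive-eigenvalue count (a zero row/column, or a direct summand that is negative semidefinite, cannot create new positive eigenvalues)'' is not justified: the $y$-row and column are neither zero nor a direct summand---they are genuinely coupled to all non-loop $z_i$---and Cauchy interlacing only bounds the positive-eigenvalue count of the bordered matrix by one \emph{more} than that of $B$. Indeed, when $\rank(M)\le 1$ the block $B$ is zero (no positive eigenvalues) while $Q$ has one, so the count does increase; what you would need to rule out is the jump from one to two, and that requires a real argument, not a general principle. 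Your alternative route through $a=\1$ and ``rank-one part killed on $L$ minus defect terms'' is too vague to evaluate and runs into the same issue.

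The paper sidesteps this entirely by a cleaner choice of test point: take $a = (1,0,\dots,0)$, the $y$-coordinate vector, and use condition~(\ref{cond:5}) of \cref{lem:LC_Quad}. Then $Qa = (n-1)\begin{bmatrix} n \\ \1\end{bmatrix}$ and $a^\intercal Qa = n(n-1)$, and a direct computation shows that the $y$-row and $y$-column of $(a^\intercal Qa)\,Q - (Qa)(Qa)^\intercal$ vanish identically. So the whole negative-semidefiniteness check collapses to the $z$-block $nB - (n-1)\1\1^\intercal$. Writing $B = \1\1^\intercal - \sum_{i=1}^c \1_{P_i}\1_{P_i}^\intercal$ (your complete-multipartite observation, with $\1$ now denoting the indicator of non-loops) turns this into $\1\1^\intercal - n\sum_i \1_{P_i}\1_{P_i}^\intercal$, and negative semidefiniteness is then a one-line Cauchy--Schwarz on the partition: $(\1^\intercal x)^2 = \bigl(\sum_i \1_{P_i}^\intercal x\bigr)^2 \le c\sum_i (\1_{P_i}^\intercal x)^2 \le n\sum_i (\1_{P_i}^\intercal x)^2$. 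The choice $a=e_y$ is what makes the $y$-coupling disappear for free; once you see it, there is no eigenvalue bookkeeping left to do.
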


\begin{proof}
After taking derivatives and rescaling, we see that 
\[ q \ = \ \frac{\partial_{y}^{n-2}g_M }{(n-2)!} \ = \   \frac{n(n-1)}{2}\cdot y^2 +(n-1)\cdot \sum_{\set{i}\in \I}y z_i+ \sum_{\set{i, j}\in \I} z_iz_j. \]

Let $Q$ denote the Hessian $\nabla^2q$ of $q$.  
Note that columns and rows of $\nabla^2 q$ corresponding to loops in $M$ are zero, and the log-concavity 
of $q$ only depends on the principal submatrix of $Q$ indexed by non-loops.   
In this spirit and in a slight abuse of notation, we use $\1$ within this proof to denote the indicator vector of the non-loops of $M$. 
Then we find that 
	\[ \nabla^2q   \ \ = \ \  Q \ \ = \ \ 
	\begin{bmatrix}
			n(n-1)&  (n-1) \1^\intercal\\
			(n-1) \1& B
	\end{bmatrix},
	 \]
	where $B$ is an $n\times n$ matrix with $B_{ij}=1$ when $\set{i,j}$ has rank two in $M$ and $B_{ij}=0$ otherwise. 
	Since $q$ is quadratic, its Hessian does not depend on any evaluation, so $q$ is log-concave on $\R_{\geq 0}^{n+1}$ if and only if it is 
	log-concave at the point $a = (1,0, \hdots, 0)$. By \cref{lem:LC_Quad} (\cref{cond:1} $\Leftrightarrow$ \cref{cond:5}), this happens if and only if the matrix 	
		\[
	(a^{\intercal}Qa)Q - (Qa)(Qa)^{\intercal}  
	\ = \ n(n-1)Q - (n-1)^2 \begin{bmatrix} n \\ \1\end{bmatrix}\begin{bmatrix} n \\ \1\end{bmatrix}^{\intercal}  
	\ = \ (n-1)\begin{bmatrix}
			0& 0\\
			0& n B - (n-1)\1\1^{\intercal}
	\end{bmatrix}\]	
	is negative semidefinite. Thus it suffices to show that $n B - (n-1)\1\1^{\intercal}$ is negative semidefinite. 
		As $M$ is a matroid, the matroid partition property tells us that the nonloops of $M$ may be partitioned into equivalence classes of parallel elements $P_{1},\dots,P_{c}$.
	This lets us rewrite the matrix $B$ as 
		\[ 
		B \ = \ \1\1^\intercal-\sum_{i=1}^c \1^{\ }_{P_i} \1_{P_i}^\intercal 
		\ \ \ \ \text{ and }\ \ \ \
		n B - (n-1)\1\1^{\intercal}  \ = \   \1\1^\intercal- n\cdot \sum_{i=1}^c \1^{\ }_{P_i} \1_{P_i}^\intercal.
		\]
	We can now check that this matrix is negative semidefinite. Let $x\in \R^n$ and consider 
	\[
	x^\intercal(n B - (n-1)\1\1^{\intercal} ) x  \ \ = \ \ \left(\1^\intercal x \right)^2 - n\cdot \sum_{i=1}^c \left(\1_{P_i}^\intercal x \right)^2. 
	\]
	Since $P_1, \hdots P_c$ partition the non-loops of $M$, $\1 $ equals $ \sum_{i=1}^c\1_{P_i}$. 
	For any real numbers $u_1, \hdots, u_c$, the Cauchy-Schwarz inequality implies that $(\sum_{i=1}^cu_i)^2 \leq c\cdot \sum_{i=1}^c u_i^2$. 
	This then gives that 
	\[
	\parens*{\1^\intercal x}^2 \  \ = \ \  \parens*{\sum_{i=1}^c\1_{P_i}^\intercal x}^2 \ \ \leq  \ \ c\cdot \sum_{i=1}^{c} \parens*{\1_{P_i}^\intercal x }^2 \ \  \leq  \ \ n\cdot \sum_{i=1}^c \parens*{\1_{P_i}^\intercal x}^2.
	\]
	For the last inequality, we use the fact the number of equivalence classes $c$ of nonloops of $M$ is at most $n$.
	It follows that $x^\intercal(n B - (n-1)\1\1^{\intercal} ) x \leq 0$ for all $x$ and by \cref{lem:LC_Quad}, $q$ is log-concave on $\R_{\geq 0}^{n+1}$. 
\end{proof}

\begin{proof}[Proof of \cref{thm:IndPolyCLC}]
	We will use the criterion in \cref{thm:CLCQuad} to show complete log-concavity. 
	
	Here we use $\partial_{i}$ to mean $\partial_{z_{i}}$ and for 
	$\alpha \in \Z_{\geq 0}^n$, $\partial^{\alpha}$ to denote $\prod_{i=1}^n\partial_i^{\alpha_i}$. 
	We need to show that for every $k\in \Z_{\geq 0}$ and $\alpha \in \Z_{\geq 0}^n$ with $k + \abs{\alpha}\leq n-2$, 
	the polynomial $\partial_y^{k}\partial^\alpha g_M$ is indecomposable and that for $k + \abs{\alpha} = n-2$ it is log-concave. 
	
	Note that if $\alpha_i\geq 2$ for any $i$, then $\partial^{\alpha}g_M$ is zero, so we may consider $\alpha = \1_J$ for some $J\subseteq [n]$. 
	Similarly, if $J$ is not an independent set of $M$, then $\partial^{\alpha}g_M = \partial^{J}g_M =0$. 
	Therefore is suffices to consider $\alpha = \1_J$ for $J\in \I$. In this case, 
	the derivative $\partial^{J}g_M$ equals the polynomial $g_{M/J}$ of the contraction $M/J$, namely 
	\[
	\partial^{J}g_M \ \ = \  \  \sum_{I\in \I : J\subseteq I}y^{n-\card{I}}\prod_{i\in I\setminus J} z_i
	\ \ = \  \  \sum_{I\in \I : J\subseteq I}y^{n-\card{J}-\card{I\setminus J}}\prod_{i\in I\setminus J} z_i \ \  = \ \ g_{M/J}.
	\] 
 Recall that $M/J$ is a matroid on ground set $[n]\setminus J$ with independent sets $\set{I\setminus J  \given J\subseteq I\in \I}$. 

First we check indecomposability of $\partial_y^{k}\partial^Jg_M =\partial_y^{k}g_{M/J}$. 
Note that if $i \in [n]\setminus J$ is a loop of $M/J$, then the variable $z_i$ does not appear in $g_{M/J}$ and $\partial_ig_{M/J} =0$. 
Similarly, $\partial_ig_{M/J}$ is zero for all $i\in J$. 
Otherwise, the monomial $y^{n-\card{J}-1-k}z_i$ appears in $\partial_y^{k}g_{M/J}$ with non-zero coefficient. 
 Since $k+\card{J} \leq n-2$, it follows that $\partial_y\partial_i g_{M/J}$ is non-zero. 
In particular, the graph formed in \cref{def:indecomposable} is a star centered at the variable $y$, and thus connected.
Therefore $\partial_y^{k}\partial^Jg_M$ is indecomposable. 

Now suppose $k + \card{J}=n-2$.  Since $M/J$ is a matroid on $n - \card{J}$ elements, \cref{lem:Rank2} imples that 
$\partial_y^{n-\card{J}-2}g_{M/J} = \partial_y^{k}\partial^Jg_{M}$ is log-concave on $\R_{\geq 0}^{n+1}$. All together with \cref{thm:CLCQuad}, this implies that the polynomial $g_M$ is completely log-concave. 
\end{proof}

\begin{corollary}\label{cor:BivariateIndep}
Given a matroid $M=([n], \I)$ with $\I_{k}$ independent sets of size $k$,  the bivariate polynomial
\[
f_M(y,z)\ = \ \sum_{k=0}^r  \ \I_{k}  \ y^{n-k}z^k, 
\]
is completely log-concave. 
\end{corollary}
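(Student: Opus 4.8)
The plan is to obtain $f_M$ as a linear restriction of the polynomial $g_M$ from \cref{thm:IndPolyCLC} and then invoke \cref{lem:LinearPreserver}. First I would introduce the linear map $T\colon \R^2\to\R^{n+1}$ given by $T(y,z)=(y,z,z,\dots,z)$, i.e. the substitution $y\mapsto y$ and $z_i\mapsto z$ for every $i\in[n]$. Every coordinate of $T(y,z)$ is nonnegative whenever $y,z\geq 0$, so $T(\R_{\geq 0}^2)\subseteq \R_{\geq 0}^{n+1}$, and $T$ is in particular an affine transform satisfying the hypothesis of \cref{lem:LinearPreserver}.

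Next I would check the elementary identity $g_M\circ T = f_M$. Substituting,
\[
g_M(y,z,\dots,z) \ = \ \sum_{I\in\I} y^{n-\card{I}}\prod_{i\in I} z \ = \ \sum_{I\in\I} y^{n-\card{I}} z^{\card{I}} \ = \ \sum_{k=0}^{r} \I_k\, y^{n-k} z^k \ = \ f_M(y,z),
\]
where we grouped the independent sets by size and used that $\I_k$ counts the independent sets of size $k$. Then \cref{thm:IndPolyCLC} gives that $g_M$ is completely log-concave, and \cref{lem:LinearPreserver} applied with $f=g_M$ and the transform $T$ shows that $f_M = g_M\circ T$ is completely log-concave.

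There is essentially no obstacle here: the substantive content is entirely contained in \cref{thm:IndPolyCLC}, and what remains is the one-line identity above together with the observation that the diagonal substitution map is nonnegative (indeed linear). The only point worth stating explicitly is that \cref{lem:LinearPreserver} requires the transform to send $\R_{\geq 0}^m$ into $\R_{\geq 0}^n$, which our $T$ clearly does.
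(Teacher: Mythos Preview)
Your proof is correct and follows exactly the same approach as the paper: restrict $g_M$ along the diagonal linear map $(y,z)\mapsto(y,z,\dots,z)$ and apply \cref{lem:LinearPreserver}. The only difference is that you spell out the identity $g_M(y,z,\dots,z)=f_M(y,z)$ in full, which the paper leaves implicit.
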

\begin{proof}
Note that $f_M$ is the restriction of the completely log-concave polynomial $g_M$ to $z_i = z$ for all $i\in [n]$.  Since the image of $\R_{\geq 0}^2$ under the linear map 
$(y,z) \mapsto (y,z,\hdots, z)$ is contained in $\R_{\geq 0}^{n+1}$, \cref{lem:LinearPreserver} implies that 
 $f_M(y,z) = g_M(y,z, \hdots, z)$ is completely log-concave. 
\end{proof}

\section{Proof of Mason's conjecture}\label{sec:mason}

We use the following proposition, which was first observed by Gurvits \cite{Gur08}, and give a short proof for the sake of completeness. 

\begin{proposition}[Proposition 2.7 from \cite{Gur08}]\label{thm:CLCtoULC} 
If $f = \sum_{k=0}^{n} c_{k}y^{n-k}z^{k} \in \R[y,z]$ is completely log-concave, then 
the sequence $c_{0},\dots,c_{n}$ is ultra log-concave. That is, for every $1<k <n$,
\[\parens*{\frac{c_{k}}{\binom{n}{k}}}^{2} \geq \frac{c_{k-1}}{\binom{n}{k-1}} \cdot \frac{c_{k+1}}{\binom{n}{k+1}}.\]
\end{proposition}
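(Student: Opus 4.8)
The plan is to extract the ultra-log-concavity inequality from a single, well-chosen partial derivative of $f$, exploiting that derivatives of completely log-concave polynomials are log-concave over the positive orthant. Fix $k$ with $1 < k < n$. I would apply the operator $\partial_y^{\,k-1}\partial_z^{\,n-k-1}$ to $f$. Since $f$ is completely log-concave, the resulting polynomial is nonnegative and log-concave over $\R_{\geq 0}^2$, and because $f$ is homogeneous of degree $n$ this derivative is homogeneous of degree $2$. A direct computation (just differentiating monomials $y^{n-j}z^j$) shows that only the three terms $j = k-1, k, k+1$ survive, giving
\[
\partial_y^{\,k-1}\partial_z^{\,n-k-1} f \;=\; A\, y^2 + B\, yz + C\, z^2,
\]
where $A,B,C$ are $c_{k-1}, c_k, c_{k+1}$ each multiplied by an explicit positive factorial coefficient; I would record these coefficients carefully so the binomial factors come out right at the end.

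Next I would invoke \cref{lem:LC_Quad} for this quadratic in two variables. Its Hessian $Q$ is the constant symmetric matrix $\begin{bmatrix} 2A & B \\ B & 2C\end{bmatrix}$, and log-concavity over $\R_{\geq 0}^2$ at, say, the point $(1,0)$ — together with the equivalence (\cref{cond:1} $\Leftrightarrow$ \cref{cond:5}) — forces $(a^\intercal Q a) Q - (Qa)(Qa)^\intercal$ to be negative semidefinite, which after expanding reduces to $\det Q \leq 0$, i.e. $4AC - B^2 \leq 0$, i.e. $B^2 \geq 4AC$. Alternatively and more simply: a quadratic $Ay^2 + Byz + Cz^2$ with nonnegative coefficients is log-concave on the positive orthant iff $B^2 \geq 4AC$ (this is exactly the rank-one-or-fewer-positive-eigenvalues condition for a $2\times2$ matrix). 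Substituting the factorial coefficients for $A,B,C$ and simplifying the resulting ratios of factorials into binomial coefficients yields precisely $\left(c_k/\binom{n}{k}\right)^2 \geq \left(c_{k-1}/\binom{n}{k-1}\right)\left(c_{k+1}/\binom{n}{k+1}\right)$.

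I expect the only real work to be the bookkeeping in the last step: verifying that the combinatorial prefactors produced by $\partial_y^{k-1}\partial_z^{n-k-1}$ acting on $y^{n-j}z^j$ combine so that the inequality $B^2 \ge 4AC$ turns into the stated normalized form — this is routine but must be done without sign or off-by-one errors (in particular checking the degenerate cases where some $c_j$ vanishes, where the inequality holds trivially since both sides are controlled appropriately). The conceptual content — that a degree-two completely-log-concave bivariate polynomial has a discriminant-type inequality on its coefficients — is immediate from \cref{lem:LC_Quad}, so no genuine obstacle arises there. One should also note at the outset that $\partial_y^{k-1}\partial_z^{n-k-1} f$ might be identically zero (if $c_{k-1}=c_k=c_{k+1}=0$), in which case the claimed inequality reads $0 \geq 0$ and holds; otherwise the lemma applies as stated.
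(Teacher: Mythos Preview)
Your approach is essentially identical to the paper's: take an order-$(n-2)$ partial derivative to reduce to a homogeneous quadratic in $y,z$, then read off $\det(\nabla^2 q)\le 0$ from \cref{lem:LC_Quad}. One small slip: with the operator $\partial_y^{\,k-1}\partial_z^{\,n-k-1}$ the surviving monomials are $j=n-k-1,n-k,n-k+1$, not $j=k-1,k,k+1$; the paper uses $\partial_y^{\,n-k-1}\partial_z^{\,k-1}$, which does isolate $j=k-1,k,k+1$ and makes the Hessian entries come out directly as $n!\cdot c_m/\binom{n}{m}$ with no further bookkeeping.
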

\begin{remark}
In \cite{Gur08}, Gurvits assumes \emph{strong log-concavity} and also shows the
converse. In a future article, we show the equivalence of strong and complete log-concavity for 
homogeneous polynomials.
\end{remark}

\begin{proof}
 	Since $f$ is completely log-concave, for any $1< k<n$,  the quadratic 
	$q(y,z) = \partial_y^{n-k-1}\partial_z^{k-1} f$ is log-concave over $\R_{\geq 0}^2$. 
	Notice that for any $0\leq m \leq n$, 
	\[		
	 \partial_y^{n-m}\partial_z^{m} f \ = \ (n-m)!  \ m! \ c_{m} \ = \ n! \ \frac{c_{m}}{\binom{n}{m}}.
	\]	
	Using this for $m=k-1,k, k+1$, we can write the Hessian of $q$ as
	\[
	\nabla^2q  \ = \ 
	\begin{bmatrix} 
	 \partial_y^2 q	& \partial_y\partial_z q \\
	\partial_y\partial_z q 	& \partial_z^2 q
	 	\end{bmatrix}
	\ = \ n! \  
	\begin{bmatrix} 
	 \left. c_{k-1}  \middle/\binom{n}{k-1}\right.	& \left.c_{k}\middle/ \binom{n}{k}\right. \\
	 \left.c_{k} \middle/ \binom{n}{k}\right.  	& \left.c_{k+1} \middle/\binom{n}{k+1}\right.
	 	\end{bmatrix}.
	\]	
Since $q$ is log-concave on $\R_{\geq 0}^2$, by \cref{lem:LC_Quad} its Hessian cannot be positive or negative definite. 
Its determinant is therefore non-positive.  This gives the desired inequality: 
\[	
0  \ \geq  \ \det(\nabla^2q)  \ = \  (n!)^2\parens*{\frac{c_{k-1}}{ \binom{n}{k-1}} \cdot \frac{c_{k+1}}{ \binom{n}{k+1}} - \parens*{ \frac{c_{k}}{ \binom{n}{k}}}^2}. 
\]		
\end{proof}

The strong version of Mason's conjecture, \cref{thm:mason}, then follows from \cref{cor:BivariateIndep}.

\printbibliography
		
\end{document}